\documentclass[11pt]{amsart}
\usepackage{amssymb, amsthm, amsfonts, latexsym,amsmath, amscd,hyperref}

\textwidth 6.3in
\textheight 8.6in
\topmargin -0.5in
\oddsidemargin 0in
\evensidemargin 0in

\theoremstyle{plain}
\newtheorem {theorem} {Theorem}[section]
\newtheorem{prop} [theorem]{Proposition}
\newtheorem{lemma}[theorem] {Lemma}
\newtheorem{corr}[theorem]{Corollary}
\theoremstyle{definition}
\newtheorem{exam}[theorem]{Example}
\newtheorem{remark}[theorem]{Remark}
\newtheorem{defin}[theorem]{Definition}
\numberwithin{equation}{section}
\newcommand{\calr}{{\mathcal R}}

\newcommand{\eqman}{ {\mathcal R}^\infty}

\newcommand{\bet}{\text{\tiny$\beta$}}

\newcommand{\hook} {\mathbin{\raise2.5pt\hbox{\hbox{{\vbox{\hrule height.4pt
width6pt depth0pt}}}\vrule height3pt width.4pt depth0pt}\,}}

\begin{document}

%
%

\title[The inverse problem of the calculus of variations]
{The inverse problem of the calculus of variations for
systems of second-order partial differential equations in the plane}

\author[M. Biesecker]{Matt Biesecker}

\address{Department of Mathematics and Statistics \\ South Dakota State University \\ Brookings, SD 57006}

\email{Matt.Biesecker@sdstate.edu}

\date{September 20, 2009}

\subjclass[2000]{53B50, 49N45}

\keywords{Inverse problem, variational principles for partial differential
equations, variational bicomplex}

\begin{abstract}  A complete solution to the multiplier version of the inverse
problem of the calculus of variations is given for a class of
hyperbolic systems of second-order partial differential equations
in two independent variables.   The necessary and sufficient
algebraic and differential conditions for the existence of a
variational multiplier are derived.  It is shown that the number
of independent variational multipliers is  determined by the
nullity of a completely algebraic system of equations associated
to the given system of partial differential equations.  An
algorithm for solving the inverse problem is demonstrated on
several examples. Systems of second-order partial differential
equations in two independent and dependent variables are studied
and systems which have more than one variational formulation are
classified up to contact equivalence.
\end{abstract}
\maketitle

%
%

\section{Introduction}  In this paper we examine the inverse
problem of the calculus of variations for systems of partial
differential equations
\begin{equation}\label{mainsys}
u^\alpha_{xy} = f^\alpha(x,y,u^\gamma,u^\gamma_x,u^\gamma_y),
\qquad \alpha,\, \gamma = 1,2,\dots,m,
\end{equation}
in $m$ dependent variables $u^\alpha$ and two independent
variables $x$ and $y.$  Systems \eqref{mainsys} arise in various
contexts such as symmetry reduction in general relativity,
non-linear $\sigma$-models, and generalized Toda lattices (see
\cite{Forger1981},\cite{Ernie2001},\cite{LeznovSaveliev1980},\cite{Plebanski1988},
and \cite{Vass1994}).   For systems \eqref{mainsys} we give a
complete solution to the variational multiplier version of the
inverse problem.  In particular, we give an explicit algorithm for
determining the number of possible inequivalent Lagrangians for a
given system \eqref{mainsys}.

The \emph{variational multiplier problem} is stated as follows:
Given a system of differential equations
$F_{\alpha}(x^i,u^\gamma,u^\gamma_i,u^\gamma_{ij},
u^\gamma_{ijk},\dots)=0,$ does there exist a Lagrangian
$L(x^i,u^\gamma,u^\gamma_i,u^\gamma_{ij},\dots)$ and functions
$M^\gamma_\alpha,$ with $\det \left(M^\gamma_\alpha\right) \neq
0,$ such that
\begin{equation}\label{multproperty}
E_{\alpha}  (L)  =  M^\gamma_\alpha  F_\gamma,
\end{equation}
where $E$ is the Euler-Lagrange operator.  If such an
$M^\alpha_\gamma$ exists, then it is referred to as a {\it
variational multiplier.} If \eqref{multproperty} is satisfied, the
equations $F_{\alpha}=0$ are equivalent to a system of
Euler-Lagrange equations in the sense that solutions of the
Euler-Lagrange equations for $L$ are solutions to $F_{\alpha}=0$
and conversely solutions to $F_\alpha=0$ are solutions for
$E_{\alpha}(L)=0.$

Also of importance is the number of Lagrangians for a particular
system.   It is well known that two Lagrangians $L$ and $L^{'}$
have identical Euler-Lagrange expressions if and only if $L$ and
$L^{'}$ differ  by a total divergence, at least locally. Therefore
we consider two Lagrangians to be {\it equivalent} if they differ
by a total divergence.    We note that it is possible to have a
system of differential equations which have two or more
inequivalent Lagrangians (and thus more than one multiplier).

The variational multiplier problem is of some interest in
theoretical physics. It is widely accepted that fundamental
physical theories can be derived from an action principle, or
Lagrangian.    For a given theory it is important to determine
whether the action is unique.     Examples with multiple
Lagrangians exist in Newtonian Mechanics  and  the $SU(2)$ chiral
model (\cite{Henneaux1984}, \cite{HenneauxShep1982}.)

The simplest case of the variational multiplier problem is for a
scalar second-order ordinary differential equation
$u_{xx}=F(x,u,u_x).$ It has been shown by several authors,
including Darboux \cite{Darboux1894}, that any scalar second-order
ODE is variational and the most general multiplier (and
Lagrangian) depends on an arbitrary function of two variables. The
multiplier problem for systems of second-order ordinary
differential equations  has been studied by many authors including
Douglas \cite{Douglas1941}, Anderson and Thompson
\cite{AndThomp1992},  Thompson, Crimpin, Sarlet, Prince and
Martinez (See \cite{CPST1999},\cite{CSMP1994},\cite{SCM1998}, and
\cite{STG2002}.) Our solution to the multiplier problem for
systems \eqref{mainsys} is partially based on ideas developed in
Anderson and Thompson's paper \cite{AndThomp1992}. In that paper
Anderson and Thompson used the variational bicomplex to derive a
system of algebraic and differential conditions, with components
of the multiplier matrix as unknowns, for the existence of a
multiplier.   They showed that there is a one-to-one
correspondence between variational multipliers for the given
system and certain cohomology classes in the variational bicomplex
associated to the given system of differential equations. While a
significant amount of research has been done on the variational
multiplier problem for second-order ODE systems, a complete
solution remains elusive in the sense that there is no general
closed form characterization, in terms of invariants for the
system, for determining the existence and degree of uniqueness of
multipliers for the given system.

The variational multiplier problem for higher order scalar
ordinary differential equations has been studied by Fels
\cite{Fels1996} and Jur\'a\v{s} \cite{Juras2000}.  Fels
\cite{Fels1996} obtained a complete solution to the variational
multiplier problem for fourth-order scalar ordinary differential
equations $u_{xxxx} = F(x,u,u_x,u_{xx},u_{xxx} ).$ Using Cartan's
method of equivalence, he was able to produce two differential
invariants whose vanishing completely characterizes the existence
of a variational multiplier. Unlike the second-order case, the
multiplier is unique up to a constant multiple.   In
\cite{Juras2000}, Jur\'a\v{s}  obtained a similar solution for
sixth and eighth-order scalar ordinary differential equations,
although the differential invariants are increasingly complicated
for higher order systems.

For partial differential equations, there are fewer papers on the
variational multiplier problem.  Anderson and Duchamp
\cite{AndDuchamp1984} studied the variational multiplier problem
for scalar second-order quasilinear partial differential equations
$F= 0$ where $F=  A^{ij}(x^k, u, u_k )  u_{ij} + B(x^k, u, u_k ).$
Anderson and Duchamp \cite{AndDuchamp1984}  proved that if $
\det(A^{ij}) \neq 0,$  then $F$ has a variational multiplier if
and only if a certain 1-form $\chi$ is closed. Moreover, $\chi$ is
expressed explicitly in terms of $F$ and its derivatives and the
multiplier, if it exists,  is unique up to a constant multiple.
They also  showed that second-order scalar evolution equations are
never variational. Jur\'a\v s \cite{Juras1997} examined the
inverse problem for a scalar hyperbolic second-order partial
differential equations in two independent variables.    He was
able to show that an equation is variational if and only if two
particular differential invariants $H$ and $K$ are identically
equal. Moreover, the multiplier is unique up to a constant.
Jur\'a\v s' result is equivalent to the Anderson-Duchamp result if
the equation is quasilinear. However, his results also apply to
hyperbolic Monge-Ampere equations.

In this paper we study the variational multiplier problem for $f$-Gordon
systems
\begin{equation}\label{gordy1}
u_{xy}^\alpha = f^\alpha(x,y,u^\gamma, u^\gamma_x, u^\gamma_y),
\qquad \alpha, \, \gamma=1,2, \dots , m.
\end{equation}
We remark that we are working under the assumption that the
functions $f^\alpha$ are $C^\infty$ on some open set $U \subset
{\mathbf R^2} \times{\mathbf R}^{3m}.$ In the following section we
outline the complete solution to the multiplier version of the
inverse problem for systems \eqref{gordy1}. In particular, we give
an algorithm for determining the number of variation multipliers
(and Lagrangians) for a given system \eqref{gordy1}. We show that
the most general multiplier depends on finitely many constants
determined by the dimension of the nullspace of a certain matrix
depending on the functions $f^\alpha$ and their derivatives. In
Section 3 we demonstrate our algorithm for solving the inverse
problem on several examples. In Section 4 we classify all systems
\eqref{gordy1} in two dependent variables that admit two or more
inequivalent Lagrangians.  In Sections 5 and 6, we turn our
attention to proving two technical propositions stated in Section
2. In particular, we define the \emph{variational bicomplex} for
systems \eqref{gordy1} and show there is a one-to-one
correspondence between the Lagrangians for systems \eqref{gordy1}
and special classes of $3$-forms $\omega$ with $d \omega = 0.$  We
then derive necessary and sufficient algebraic and differential
conditions for the existence of a variational multiplier.  In the
appendix we prove a general theorem on the existence of solutions
to systems of combined differential and algebraic equations that
allows us to determine the number of variational multipliers from
purely algebraic data.

This work is an extension of a result established in my PhD dissertation.  I wish to thank my advisor Ian
Anderson for suggesting this problem and an uncountable number of helpful discussions on this subject.

%
%

\section{Main Results}   In this section we state our solution to
the variational multiplier problem for systems \eqref{mainsys}.
Our solution depends on two propositions which we will prove in
Section 6. The first proposition gives us a general normal form
for variational systems \eqref{mainsys} and states that the
Lagrangian associated to a particular variational multiplier is
unique up to modification by a total divergence.

\begin{prop}\label{LnormalF}  If there exists a first-order Lagrangian
$L(x,y,u^\gamma,u^\gamma_x,u^\gamma_y)$ and a non-degenerate
variational multiplier
$M_{\alpha\bet}(x,y,u^\gamma,u^\gamma_x,u^\gamma_y)$ such that
\begin{equation*}
E_\alpha(L) = M_{\alpha\bet} \left( u^\bet_{xy} -
f^\bet(x,y,u^\gamma,u^\gamma_x,u^\gamma_y) \right),
\end{equation*}
then $M_{\alpha\bet}=M_{\alpha\bet}(x,y,u^\gamma),$
$M_{\alpha\bet} = M_{\bet\alpha},$
\begin{equation}
\dfrac{\partial^2 f^\alpha}{\partial u^\bet_x \partial u^\gamma_x} = 0, \qquad
\dfrac{\partial^2 f^\alpha}{\partial u^\bet_y \partial u^\gamma_y} = 0, \quad
\forall \, \alpha,\beta,\gamma=1,2,\dots,m.
\end{equation}
Moreover, if $\tilde L(x,y,u^\gamma,u^\gamma_x,u^\gamma_y)$ is
another Lagrangian associated to the variational multiplier
$M_{\alpha\bet}$ such that $E_\alpha(\tilde L) = M_{\alpha\bet} \,
(u^\bet_{xy} - f^\bet),$ then $ \tilde L = L + \text{Div} \
{\mathbf Q},$ where ${\mathbf Q}=( \, Q_1(x,y,u^\gamma),
Q_2(x,y,u^\gamma) \, )$ and  $\text{Div} \ {\mathbf Q} = D_x Q_1 +
D_y Q_2.$
\end{prop}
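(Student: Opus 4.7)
My strategy is to compare polynomial coefficients in the jet variables on both sides of $E_\alpha(L) = M_{\alpha\beta}(u^\beta_{xy} - f^\beta)$, exploiting the fact that the right-hand side carries only one type of second derivative ($u^\beta_{xy}$) while the Euler--Lagrange operator applied to a first-order Lagrangian generically produces all three. First I would expand
$$
E_\alpha(L) = \frac{\partial L}{\partial u^\alpha} - D_x\frac{\partial L}{\partial u^\alpha_x} - D_y\frac{\partial L}{\partial u^\alpha_y}
$$
and read off the coefficients of $u^\gamma_{xx}$ and $u^\gamma_{yy}$, which are $-L_{,u^\alpha_x u^\gamma_x}$ and $-L_{,u^\alpha_y u^\gamma_y}$ respectively. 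Forcing both to vanish shows that $L$ is multi-affine in the first-order jet, so
$$
L = A + B_\alpha u^\alpha_x + C_\alpha u^\alpha_y + D_{\alpha\beta}\, u^\alpha_x u^\beta_y,
$$
with $A,B_\alpha,C_\alpha,D_{\alpha\beta}$ functions of $(x,y,u^\gamma)$ only. The coefficient of $u^\beta_{xy}$ then gives $M_{\alpha\beta} = -(D_{\alpha\beta} + D_{\beta\alpha})$, manifestly symmetric and independent of first derivatives, proving the first two claims.

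Substituting this form of $L$ into the identity, the residual equality is polynomial in $u^\gamma_x, u^\gamma_y$. Because $L$ is linear in each $u^\alpha_x$ and each $u^\alpha_y$ separately, its Euler--Lagrange expression contains no $u^\gamma_x u^\delta_x$ or $u^\gamma_y u^\delta_y$ monomials; matching the corresponding coefficients on the right-hand side gives $M_{\alpha\beta}\, f^\beta_{,u^\gamma_x u^\delta_x} = 0$ and $M_{\alpha\beta}\, f^\beta_{,u^\gamma_y u^\delta_y} = 0$, which yields the required $f$-conditions after using $\det M \neq 0$.

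For the uniqueness clause, set $\Phi = \tilde L - L$, so $E_\alpha(\Phi) = 0$: $\Phi$ is a first-order null Lagrangian. Since both $L$ and $\tilde L$ inherit the structural form above and share the same multiplier, $\Phi = \Delta A + \Delta B_\alpha u^\alpha_x + \Delta C_\alpha u^\alpha_y + \Delta D_{\alpha\beta} u^\alpha_x u^\beta_y$ with $\Delta D_{\alpha\beta}$ antisymmetric. Extracting from $E_\alpha(\Phi) = 0$ the pieces of each bidegree in $(u^\gamma_x, u^\gamma_y)$ produces a hierarchy of differential identities on $\Delta A,\Delta B,\Delta C,\Delta D$, which I would integrate to build explicit $Q_1(x,y,u^\gamma),\,Q_2(x,y,u^\gamma)$ with $\Phi = D_x Q_1 + D_y Q_2$.

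The main obstacle is the antisymmetric piece $\Delta D_{\alpha\beta} u^\alpha_x u^\beta_y$. The pure first-derivative part of the null identity forces $\Delta D_{\alpha\beta,x} = \Delta D_{\alpha\beta,y} = 0$, while the $u^\alpha_x u^\gamma_y$ part forces the cocycle condition $\Delta D_{\alpha\gamma, u^\sigma} + \Delta D_{\gamma\sigma, u^\alpha} + \Delta D_{\sigma\alpha, u^\gamma} = 0$, which is the closedness on the fibre $\mathbf{R}^m$ of the 2-form $\frac{1}{2}\Delta D_{\alpha\beta}\, du^\alpha \wedge du^\beta$. On the other hand a divergence $D_x Q_1 + D_y Q_2$ of $(x,y,u^\gamma)$-only functions can produce no $u^\alpha_x u^\beta_y$ term whatsoever, so in contrast to the classical (Jacobian-type) null Lagrangians one must argue that, under our non-degenerate-multiplier hypothesis and the normal-form expression of $L$, the antisymmetric $\Delta D$ in fact vanishes; this is the delicate point where the structural rigidity coming from the first three conclusions will have to be brought to bear.
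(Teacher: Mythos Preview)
Your treatment of the first three conclusions (the coefficient matching that forces $L$ to be bi-affine, reads off $M_{\alpha\beta}=-(D_{\alpha\beta}+D_{\beta\alpha})$, and then uses $\det M\neq 0$ to get the affine structure of $f^\alpha$) is correct and is precisely the route the paper takes in the proof of Theorem~\ref{TomLager} (the implication (ii)$\Rightarrow$(iii)) together with Corollary~\ref{CorrNormF}.

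For the uniqueness clause the paper proceeds differently, via the variational bicomplex (Corollary~\ref{lagerdiver}): it packages $L$ into forms $\eta,\lambda$ with $d_V\eta=\omega$, $d_H\eta=d_V\lambda$ and argues that two such $\lambda$'s differ by $d_H\beta$ for some $\beta\in\Omega^{1,0}(\mathcal R^\infty)$. Your direct null-Lagrangian analysis is a legitimate alternative, and you have correctly isolated the antisymmetric piece $\Delta D_{\alpha\beta}\,u^\alpha_x u^\beta_y$ as the only obstruction. But your hoped-for endgame --- that ``structural rigidity'' forces $\Delta D=0$ --- is false: take $f\equiv 0$, $m=2$, and compare $L=-\tfrac12(u_x u_y+v_x v_y)$ with $\tilde L=L+u_x v_y-v_x u_y$; both give $E_\alpha=u^\alpha_{xy}$ with the same multiplier, yet $\Delta D\neq 0$.

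The correct resolution is not to kill $\Delta D$ but to \emph{integrate} it. Your cocycle condition says the fibre $2$-form $\tfrac12\Delta D_{\alpha\beta}\,du^\alpha\wedge du^\beta$ is closed, so locally $\Delta D_{\alpha\beta}=\partial_{u^\alpha}V_\beta-\partial_{u^\beta}V_\alpha$ for some $V_\gamma(x,y,u^\epsilon)$, and then
\[
\Delta D_{\alpha\beta}\,u^\alpha_x u^\beta_y \;=\; D_x\!\bigl(V_\beta u^\beta_y\bigr)\;-\;D_y\!\bigl(V_\alpha u^\alpha_x\bigr),
\]
a total divergence whose potentials are \emph{first-order}. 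After this subtraction the remaining part of $\Phi$ is affine in $u^\gamma_x,u^\gamma_y$ and integrates to a divergence of zero-order $Q_i$ exactly as you outlined. Thus the uniqueness statement holds with $\mathbf Q$ allowed to depend on first derivatives; the proposition's parenthetical restriction to $Q_i(x,y,u^\gamma)$ is slightly too strong as literally written, and indeed the paper's own Corollary~\ref{lagerdiver} only produces $\beta\in\Omega^{1,0}(\mathcal R^\infty)$ without pinning down its jet order. (One can recover the zero-order claim by first normalising both $L$ and $\tilde L$ so that their bilinear coefficients $D_{\alpha\beta}$ are symmetric, which is always possible by absorbing the antisymmetric part into a divergence of the type above.)
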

It follows immediately from Proposition \eqref{LnormalF} that we
may restrict ourselves to systems of the form
\begin{equation} \label{normF}
u^\alpha_{xy} + C^\alpha_{\bet\gamma}(x,y,u^\tau) \, u^\bet_x \, u^\gamma_y +
A^\alpha_\gamma (x,y,u^\tau) u^\gamma_x + B^\alpha_\gamma(x,y,u^\tau)
u^\gamma_y + E^\alpha(x,y,u^\tau)=0.
\end{equation}
We remark that the condition that $M_{\alpha\bet}$ is symmetric
and has no first-order derivative dependence also follows from a
result established by Henneaux  in \cite{Henneaux1984}.

Of paramount importance in our study of the inverse problem are
three quantities $H^\gamma_\alpha,$ $K^\gamma_\alpha,$ and
$S^\gamma_{\alpha\bet}$ defined by
\begin{subequations} \label{hks}
\begin{gather} H^\gamma_\alpha ={\small \text{$
  \dfrac{\partial f^\gamma} {\partial u^\alpha} +
\dfrac{\partial f^\gamma}{\partial u^\sigma_y} \, \dfrac{\partial
f^\sigma}{\partial u^\alpha_x} -  \dfrac{\partial^2
f^\gamma}{\partial u^\alpha_x \partial x}  - \dfrac{\partial^2
f^\gamma}{\partial u^\alpha_x \partial u^\bet} \, u^\bet_x -
\dfrac{\partial^2 f^\gamma}{\partial u^\alpha_x \partial u^\bet_y}
\, f^\bet - \dfrac{\partial^2 f^\gamma}{\partial u^\alpha_x
\partial u_x^\bet} \, u^\bet_{xx} \, , $ } }
\\
K^\gamma_\alpha = {\small\text{$\ \dfrac{\partial f^\gamma} {\partial u^\alpha}
+ \dfrac{\partial f^\gamma}{\partial u^\sigma_x} \, \dfrac{\partial
f^\sigma}{\partial u^\alpha_y} - \dfrac{\partial^2 f^\gamma}{\partial
u^\alpha_y \partial y}  - \dfrac{\partial^2 f^\gamma}{\partial u^\alpha_y
\partial u^\bet} \, u^\bet_y - \dfrac{\partial^2 f^\gamma}{\partial u^\alpha_y
\partial u^\bet_x} \, f^\bet - \dfrac{\partial^2
f^\gamma}{\partial u^\alpha_y
\partial u_y^\bet} \, u^\bet_{yy}\, ,$}}
\\
S^\alpha_{\bet\gamma} = \text{\small\text{$ \dfrac{\partial^2
f^\gamma}{\partial u^\bet_x \partial u^\alpha_y} - \dfrac{\partial^2
f^\gamma}{\partial u^\alpha_x
\partial u^\bet_y}. $ } }
\end{gather}
\end{subequations}
In the formulas \eqref{hks} and throughout this paper we are
adopting the Einstein summation convention where repeated indices
are summed upon.  We remark that the quantities $H^\gamma_\alpha,$
$K^\gamma_\alpha,$ and $S^\gamma_{\alpha\bet}$ are relative
invariants under the pseudo-group of local contact transformations
that preserve systems \eqref{mainsys}. The functions
$H^\alpha_\gamma$ and $K^\alpha_\gamma$ are natural extensions of
the \emph{generalized Laplace invariants} defined in
\cite{AndKam1997} for scalar hyperbolic equations.   For systems
\eqref{normF} it follows from \eqref{hks} that the functions
$H^\gamma_\alpha$ and $K^\gamma_\alpha$ have no second-order
derivative dependence and $S^\gamma_{\alpha\bet}$ has no
first-order derivative dependence, that is $H^\gamma_\alpha =
H^\gamma_\alpha(x,y,u^\tau,u^\tau_x,u^\tau_y),$ $K^\gamma_\alpha =
K^\gamma_\alpha(x,y,u^\tau,u^\tau_x,u^\tau_y),$ and
$S_{\alpha\bet} = S^\gamma_{\alpha\bet}(x,y,u^\tau),$ where
$H^\gamma_\alpha$ and $K^\gamma_\alpha$ are quadratic in the
variables $u^\bet_x$ and $u^\bet_y.$ Our second proposition
characterizes the algebraic and differential conditions on a
variational multiplier for a system \eqref{normF}.

\begin{prop}\label{PropMcond}  A system \eqref{normF} is multiplier
variational with a multiplier $M_{\alpha\bet}$ if and only if
$M_{\alpha\bet}$ satisfies $\det M_{\alpha\bet} \neq 0$ and
\begin{subequations}\label{EMcon}
\begin{align} \label{MHMK}
M_{\alpha\sigma} H^\sigma_\bet &= M_{\bet\sigma} K^\sigma_\alpha,
\qquad M_{\alpha\sigma} S^\sigma_{\bet\gamma} = -M_{\bet\sigma}
S^\sigma_{\alpha\gamma},
\\ \label{dMomega}
dM_{\alpha\bet} &= M_{\alpha\sigma} \, \Omega^\sigma_\bet + M_{\bet\sigma} \,
\Omega^\sigma_{\alpha}, \
\end{align}
where $\Omega^\sigma_\alpha = C^\sigma_{\alpha\tau} \, du^\tau +
A^\sigma_\alpha \, dy + B^\sigma_\alpha dx.$
\end{subequations}
\end{prop}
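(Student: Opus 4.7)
The plan is to work in the variational bicomplex on the infinite equation manifold $\eqman$ developed in Section 6, where the Lagrangian--multiplier correspondence reduces the question to a single closure condition on a 3-form. By Proposition \ref{LnormalF} one may assume the system already has the normal form \eqref{normF} and that $M_{\alpha\bet}$ is symmetric and independent of first derivatives. The bicomplex machinery of Section 6 provides a bijection between first-order Lagrangians $L$ satisfying $E_\alpha(L)=M_{\alpha\bet}(u^\bet_{xy}-f^\bet)$, modulo total divergences, and $d$-closed $3$-forms of the shape
\[
\omega \;=\; M_{\alpha\bet}\bigl(u^\bet_{xy}-f^\bet\bigr)\,\theta^\alpha\wedge dx\wedge dy,
\]
where $\theta^\alpha = du^\alpha - u^\alpha_x\,dx - u^\alpha_y\,dy$ is the standard contact form. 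It therefore suffices to translate the single condition $d\omega = 0$ into the explicit system \eqref{EMcon}.

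Next I would compute $d\omega$ on $\eqman$ in an adapted basis of contact and horizontal forms. Using $d\theta^\alpha = -du^\alpha_x\wedge dx - du^\alpha_y\wedge dy$, together with the substitutions $du^\alpha_x = \theta^\alpha_x + u^\alpha_{xx}\,dx + f^\alpha\,dy$ and $du^\alpha_y = \theta^\alpha_y + f^\alpha\,dx + u^\alpha_{yy}\,dy$, and repeatedly invoking the total derivatives $D_x, D_y$ wherever $f^\alpha$ or $M_{\alpha\bet}$ appears, the various terms reorganize so that the coefficients quadratic in $u^\bet_{xx}, u^\bet_{yy}$ and the cross terms $\partial^2 f^\gamma/\partial u^\bet_x\partial u^\alpha_y$ package themselves precisely into the invariants $H^\gamma_\alpha, K^\gamma_\alpha, S^\gamma_{\alpha\bet}$ of \eqref{hks}. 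Setting $d\omega = 0$ and matching coefficients against an independent basis of $4$-forms on $\eqman$ then splits the equation into three families: coefficients of $\theta^\alpha\wedge\theta^\bet\wedge dx\wedge dy$ yield, after antisymmetrization in $\alpha,\bet$, the first relation in \eqref{MHMK}; coefficients of the wedges coupling $\theta^\alpha,\theta^\bet$ with $du^\gamma_x$ and $du^\gamma_y$ produce the $S$-antisymmetry condition in \eqref{MHMK}; and the residual terms involving $dM_{\alpha\bet}$ paired against the horizontal forms $du^\tau, dx, dy$ collapse to the equation $dM_{\alpha\bet}=M_{\alpha\sigma}\Omega^\sigma_\bet + M_{\bet\sigma}\Omega^\sigma_\alpha$, with $\Omega^\sigma_\alpha = C^\sigma_{\alpha\tau}\,du^\tau + A^\sigma_\alpha\,dy + B^\sigma_\alpha\,dx$ read directly off the coefficients of $f^\bet$ in the normal form \eqref{normF}.

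The converse direction is the same bookkeeping run in reverse: given $M_{\alpha\bet}$ satisfying \eqref{EMcon}, the coefficient matching reassembles to $d\omega=0$, and one then invokes the local exactness statement from Section 6 to extract a genuine Lagrangian $L$. The main obstacle I expect is precisely the bookkeeping in computing $d\omega$: the expansion produces a large number of second-derivative monomials, and the argument requires showing that they collapse into exactly the three invariant combinations $H^\gamma_\alpha, K^\gamma_\alpha, S^\gamma_{\alpha\bet}$ rather than surviving as an unorganized sum. The relative contact-invariance of $H, K, S$ noted after \eqref{hks} is the structural reason such a clean grouping must exist, and it provides the target toward which the algebraic reductions should be driven.
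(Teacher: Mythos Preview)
Your overall strategy—reduce the multiplier problem to closure of a 3-form on $\eqman$ and read off \eqref{EMcon} as coefficient equations—is the paper's strategy, but the 3-form you wrote down is the wrong object, and this breaks the argument. The form $\omega = M_{\alpha\bet}(u^\bet_{xy}-f^\bet)\,\theta^\alpha\wedge dx\wedge dy$ is a type $(2,1)$ source form, not a $(1,2)$ form, and on the equation manifold $\eqman$ it vanishes identically because $u^\bet_{xy}=f^\bet$ there. Hence $d\omega=0$ is automatic on $\eqman$ and carries no information about $M$. (If instead you intend to work on the free jet space $J^\infty(E)$, then plain $d$-closure of a source form is not the variationality condition either; you would need the Helmholtz operator, which is a different computation from the one you sketch.)

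The object Section 6 actually uses (Theorem~\ref{TomLager}) is the type $(1,2)$ form
\[
\omega = (T_{\alpha\bet}\,dx + S_{\alpha\bet}\,dy)\wedge\theta^\alpha\wedge\theta^\bet + R_{\alpha\bet}\bigl(\theta^\alpha\wedge\theta^\bet_x\wedge dx - \theta^\bet\wedge\theta^\alpha_y\wedge dy\bigr),
\]
with $M_{\alpha\bet}=(R_{\alpha\bet}+R_{\bet\alpha})/2$; the equivalence between such $d$-closed $\omega$ and variational multipliers is a theorem, not a triviality. In Proposition~\ref{CondyMult} one first passes to $\omega'=\omega-\tfrac14\, d_H\bigl((R_{\alpha\bet}-R_{\bet\alpha})\,\theta^\alpha\wedge\theta^\bet\bigr)$ so that the $\theta^\alpha\wedge\theta^\bet_x$ and $\theta^\alpha\wedge\theta^\bet_y$ coefficients become the symmetric $M_{\alpha\bet}$. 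Imposing $d_H\omega'=0$ on $\eqman$ then yields: the $\theta^\alpha\wedge\theta^\bet_x$ and $\theta^\alpha\wedge\theta^\bet_y$ components give \eqref{682eq}--\eqref{683eq}; expanding \eqref{682eq} for the normal form \eqref{normF} and separating out the $u^\epsilon_x,u^\epsilon_y$ coefficients (recall $M_{\alpha\bet}$ has no first-derivative dependence) gives simultaneously \eqref{dMomega} and the $S$-condition in \eqref{MHMK}; the $\theta^\alpha\wedge\theta^\bet$ component, after substituting \eqref{683eq}, gives only the \emph{symmetric} relation $M_{\alpha\gamma}(H^\gamma_\bet+K^\gamma_\bet)=M_{\bet\gamma}(H^\gamma_\alpha+K^\gamma_\alpha)$. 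The missing antisymmetric half comes from the integrability condition $d^2M_{\alpha\bet}=0$ applied to \eqref{dMomega}, and combining the two yields $M_{\alpha\sigma}H^\sigma_\bet=M_{\bet\sigma}K^\sigma_\alpha$. Finally—and this is a step your sketch does not anticipate—one must check separately that $d_V\omega'$ is $d_H$-exact; the paper shows $d_V\omega'=d_H\zeta$ with $\zeta=\tfrac16\,M_{\alpha\sigma}S^\sigma_{\bet\gamma}\,\theta^\alpha\wedge\theta^\bet\wedge\theta^\gamma$, and this verification uses the $S$-condition once more.
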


\begin{remark}\label{RMcon}
In Section 6, we will derive the algebraic and differential
conditions \eqref{EMcon} and give an algorithm for constructing
the first-order Lagrangian $L$ associated to a variational
multiplier $M_{\alpha\bet}$ satisfying \eqref{EMcon}.   The
algebraic conditions \eqref{MHMK} include the integrability
conditions for $d^2 M_{\alpha\bet}=0$ for \eqref{dMomega}
\emph{and} genuine algebraic constraints not arising from the
integrability conditions.   We will show that the set of all
solutions to \eqref{EMcon} is a finite dimensional real vector
space with the dimension determined by the nullity of a completely
algebraic system of equations.  In particular, for any system of
the form \eqref{normF}, the solutions to \eqref{EMcon} are
completely determined. However,  in some cases there are
non-trivial solutions $M_{\alpha\bet}$ to \eqref{EMcon} that do
not satisfy $\det M_{\alpha\bet} \neq 0.$ In practice, we first
determine a basis for the solutions to \eqref{EMcon} and then
determine if a non-degenerate variational multiplier can be
constructed from a linear combination of the basis solutions.
\end{remark}

We are now ready to state our solution to the inverse problem
which says that the variational multipliers $M_{\alpha\bet}$
satisfying \eqref{EMcon} are completely characterized by the
solutions to an algebraic system $\Phi^{\alpha\bet}_a \,
M_{\alpha\bet} =0,$ where $\Phi$ depends on the given system
$u^\alpha_{xy} = f^\alpha.$

\begin{theorem}\label{inversesolution} Let $u^\alpha_{xy}= f^\alpha(
x,y,u^\gamma,u^\gamma_x, u^\gamma_y)$ be a system of $m$ partial
differential equations where $f^\alpha \in C^\infty(U)$ for some
open set $U \subset {\mathbf R^2} \times {\mathbf R^{3m}}.$   Then
there is a matrix ${\mathbf \Phi},$ with $ \text{Rank}({\mathbf
\Phi}) \leq m(m+1)/2,$ depending the functions $H^\gamma_\alpha,$
$K^\gamma_\alpha,$ and $S^\gamma_{\alpha\bet}$ and their
derivatives, whose nullspace completely determines the number of
linearly independent solutions to \eqref{EMcon}. Specifically, if
$r=\text{Rank} \ \Phi(z) $ is constant for all points $z$ in a
open set $V \subset U,$ then at every point $z_0 \in V$ there is a
neighborhood $W \subset V$ of $z_0$ where the set of solutions to
\eqref{EMcon} is a $m(m+1)/2 - r$ dimensional vector space over
${\mathbf R}.$
\end{theorem}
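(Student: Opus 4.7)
The plan is to convert the combined differential-algebraic system \eqref{EMcon} into a purely algebraic one by repeated prolongation, and then appeal to the general existence theorem announced for the appendix. Since $M_{\alpha\beta}$ is symmetric, it has $m(m+1)/2$ independent components at each point, and this is the ambient space in which the solution bundle sits. The differential relation \eqref{dMomega} is a closed-form expression for every partial derivative of $M_{\alpha\beta}$ as a linear combination of components of $M$ itself with coefficients built from $C^\sigma_{\alpha\tau}$, $A^\sigma_\alpha$ and $B^\sigma_\alpha$. Meanwhile, \eqref{MHMK} is an initial system of linear algebraic constraints on $M$ with coefficients depending on $H^\gamma_\alpha$, $K^\gamma_\alpha$ and $S^\gamma_{\alpha\beta}$.

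The key step is the prolongation. I would differentiate each algebraic relation in \eqref{MHMK} with respect to $x$, $y$ and the $u^\gamma$, substituting \eqref{dMomega} whenever a first derivative of $M$ appears. The outcome is a new family of linear equations in the components of $M$, with coefficients involving the data and its first derivatives. Stacking these onto the original algebraic system produces an enlarged linear system; iterating yields a nested sequence of linear systems whose kernels are nonincreasing subspaces of the fixed $m(m+1)/2$-dimensional ambient space, so the process must stabilize after finitely many steps. Let $\Phi$ denote the stabilized coefficient matrix and set $r=\text{Rank}(\Phi)$; by construction $r\le m(m+1)/2$, and $\ker \Phi$ is stable under further differentiation modulo substitution of \eqref{dMomega}.

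On any open set $V\subset U$ where $r$ is locally constant, $\ker \Phi$ defines a smooth subbundle of rank $m(m+1)/2 - r$. To finish, I would invoke the general theorem in the appendix on solutions of combined differential-algebraic systems: because the prolongation has stabilized, the stabilized algebraic system is compatible with \eqref{dMomega} in exactly the sense required by that theorem, so any initial datum $M_0\in \ker \Phi(z_0)$ extends uniquely to a smooth solution of the full system \eqref{EMcon} on some neighborhood $W$ of $z_0$. Hence the local solution space of \eqref{EMcon} over $W$ has dimension precisely $m(m+1)/2 - r$ over ${\mathbf R}$.

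The main obstacle is packaged into the appendix theorem: one must verify that, once prolongation has stabilized, the derivative relations \eqref{dMomega} are Frobenius-integrable on the algebraic variety $\ker \Phi$. Concretely, this amounts to showing that the mixed-partial identities $(D_iD_j-D_jD_i)M_{\alpha\beta}=0$, when expanded via \eqref{dMomega}, produce no relations beyond those already captured by $\Phi$. The advantage of phrasing this as a general appendix result is that the argument for \eqref{EMcon} may then be run entirely at the level of linear algebra on $\Phi$, without having to revisit integrability conditions in the course of proving the theorem.
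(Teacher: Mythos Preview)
Your proposal is correct and follows essentially the same route as the paper: build $\Phi_0$ from the algebraic relations \eqref{MHMK}, prolong by differentiating and substituting \eqref{dMomega}, use finite-dimensionality of the space of symmetric $m\times m$ matrices to force stabilization, and then invoke the appendix proposition to produce a basis of solutions matching the nullity of the stabilized $\Phi$. One small point the paper makes explicit that you glossed over: since $M_{\alpha\beta}$ depends only on $(x,y,u^\gamma)$ while $H^\gamma_\alpha$ and $K^\gamma_\alpha$ are quadratic in $u^\epsilon_x,u^\tau_y$, the relation $M_{\alpha\gamma}H^\gamma_\beta=M_{\beta\gamma}K^\gamma_\alpha$ must first be split by jet degree to obtain a linear system $(\Phi_0)^{\alpha\beta}_a(x,y,u)\,M_{\alpha\beta}=0$ whose coefficients live on the same space as $M$; only then does it make sense to differentiate in $x,y,u^\gamma$ alone and substitute \eqref{dMomega}.
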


\begin{proof}
The algebraic system for a multiplier $M_{\alpha\bet}$ is
constructed as follows.  As a consequence of proposition
\eqref{LnormalF}, the functions $M_{\alpha\bet}$ have no 1-jet
dependence and $H^\gamma_\alpha$ and $K^\gamma_\alpha$ are
quadratic in $u^\epsilon_x$ and $u^\tau_y.$  We can decompose the
algebraic condition $M_{\alpha\gamma} H^\gamma_\bet=M_{\bet\gamma}
K^\gamma_\alpha$  into several linear algebraic systems on the
components of the multiplier $M_{\alpha\bet},$ with the
coefficients depending only on $x,y,$ and $u.$   We then express
all algebraic conditions \eqref{MHMK} on $M_{\alpha\gamma}$ as a
single system of linear equations
\begin{equation}\label{initalg}
\left(\Phi_{0}\right)_a^{\alpha\bet} (x,y,u^\tau) \,
M_{\alpha\bet}  = 0, \qquad a=1,2,\dots,k_0,
\end{equation}
where $\Phi_0$ may be viewed as $k_0 \times m(m+1)/2$ matrix.
Differentiating \eqref{initalg} and substituting from
\eqref{dMomega}, we get the algebraic condition
\begin{equation}\label{dphi}
\left[ \, \left(d \Phi_0\right)^{\alpha\bet}_a +
\left(\Phi_0\right)^{\alpha\sigma}_a \, \Omega^\bet_\sigma +
\left(\Phi_0\right)^{\sigma\bet}_a \, \Omega^\alpha_\sigma \right]
\, M_{\alpha\bet} = 0.
\end{equation}
Equation \eqref{dphi} represents a system of purely algebraic
conditions on $M_{\alpha\bet}.$    We  then express the combined
algebraic systems \eqref{initalg} and \eqref{dphi} as
\begin{equation*}
\left(\Phi_1 \right)^{\alpha\bet}_a M_{\alpha\bet} = 0, \quad a=1,\dots, k_1.
\end{equation*}
We can proceed inductively to define a system of equations
\begin{equation}\label{willieirs}
\left(\Phi_{i+1}\right)^{\alpha\bet}_a M_{\alpha\bet} = 0, \quad a=1,\dots,
k_{i+1},
\end{equation}
where \eqref{willieirs} consists of the system $
(\Phi_i)^{\alpha\bet}_a M_{\alpha\bet} = 0$  along with the system
\begin{equation*}
[  \,(d \Phi_i)^{\alpha\bet}_a + 2 (\Phi_i)^{\alpha\sigma}_a
\Omega^\bet_\sigma ]  \, M_{\alpha\bet} = 0.
\end{equation*}
Clearly, at each point $z_0 = (x_0,y_0,u_0^\gamma) \in U \subset
{\mathbf R^2} \times {\mathbf R}^m$ and for all $j \geq 0 ,$ the
ranks of the matrices $\Phi_i$ satisfy
\begin{equation*}
0 \leq \text{Rank} \ \Phi_0 (z_0)  \leq \text{Rank}  \
\Phi_1 (z_0) \leq \dots \leq \text{Rank} \  \Phi_j (z_0)
\leq \dfrac{m(m+1)}{2}.
\end{equation*}
If given a point $z_0 \in U,$ we see that after  $ k \leq
m(m+1)/2$ differentiations, the rank  of the matrices $ \Phi_{i}
(z_0)$ must stabilize at some $0 \leq l \leq m(m+1)/2.$   More
precisely, at the point $z_0$ we have
\begin{equation*}
l=\text{Rank}  \ \Phi_k(z_0)   = \text{Rank}  \ \Phi_{k+i}(z_0),
\quad \forall \, i \geq 0.
\end{equation*}
Let $\{ C^1_{\alpha\gamma}, C^{1}_{\alpha\gamma}, \dots,
C^{s}_{\alpha\gamma}\},$ where $s=m(m+1)/2 - l,$ be a basis for
the set of solutions to the system of linear equations
\begin{equation}
\left(\Phi_{k}\right)^{\alpha\gamma}_a M_{\alpha\gamma} = 0
\end{equation}
at the point $z_0.$   In particular, $\{C^i_{\alpha\gamma}| \, i=1,\dots\}$ is
a linearly independent collection of constant bilinear forms such that
$(\Phi_{k})^{\alpha\gamma}_a (z_0)\,  C^i_{\alpha\gamma}=0.$

If the rank of $\Phi_{k} $ is constant in a neighborhood  $V \subset U$ of
$z_0,$ then a general result on systems of algebraic-differential equations,
which we prove in the appendix, states that there exists a neighborhood $W
\subset V$ of $z_0$ and a linearly independent collection of functions
$\{M^1_{\alpha\bet},M^2_{\alpha\bet},\dots,M^s_{\alpha\bet}\}$ such that
\begin{equation}\label{Epop1}
M^i_{\alpha\bet}(z_0) = C^{i}_{\alpha\bet},\quad (\Phi_{k})_a^{\alpha\bet}(z)
M^i_{\alpha\bet}(z) = 0, \quad  dM^i_{\alpha\bet} = M^i_{\alpha\sigma}
\Omega^\sigma_\bet + M^i_{\bet\sigma} \Omega^\sigma_\alpha,
\end{equation}
for all $z \in W,$ and $i=1,2,\dots,s.$  Moreover, we claim that
if a given collection of functions $M_{\alpha\bet}$ satisfies the
algebraic and differential conditions \eqref{Epop1}, then
 $M_{\alpha\bet}$ can be expressed as a linear combination
\begin{equation}\label{Ebob1}
M_{\alpha\bet} = \sum_{i=1}^s  c_i M^i_{\alpha\bet}, \quad  c_i \in \mathbf R.
\end{equation}
Indeed, if $M_{\alpha\bet}$ satisfies the algebraic condition
$(\Phi_{k})_a^{\alpha\bet}(z) M_{\alpha\bet}(z) = 0$ for all $z
\in W,$  it follows that $M_{\alpha\bet}$ can be uniquely
expressed as
\begin{equation}\label{Epop2}
M_{\alpha\bet}(z) = \sum c_i(z) \, M^i_{\alpha\bet}(z).
\end{equation}
If $M_{\alpha\bet}$ satisfies the differential condition
\eqref{dMomega}, then it follows from \eqref{Epop1} and
\eqref{Epop2} that $ M^i_{\alpha\bet} \, dc_i = 0. $ Since the
functions $M^i_{\alpha\bet}$ are pointwise linearly independent in
some neighborhood of $z_0,$ we deduce that $d c_i = 0,$ which in
turn implies that $c_i \in \mathbf R$ for $i=1,2,\dots,s.$   This
establishes \eqref{Ebob1} and completes the proof of the theorem.
\end{proof}

%
%

\section{Examples}

In this section we demonstrate our algorithm for solving the
variational multiplier problem on several examples. For each
example we calculate the invariants $H^\gamma_\alpha,$
$K^\gamma_\alpha,$ and $S^\gamma_{\alpha\bet}$ using \eqref{hks}
and we explicitly list the initial algebraic conditions
\eqref{MHMK} on a multiplier $M_{\alpha\bet}.$   We then determine
the differential condition \eqref{dMomega} and differentiate the
algebraic conditions to uncover any additional algebraic
constraints.   In each case we find the most general Lagrangian
and multiplier for the given system.

\begin{exam}  For our first example, consider the system
\begin{equation}\label{hopkins}
u_{xy} = v, \qquad v_{xy} = u.
\end{equation}
We will show that \eqref{hopkins} admits two Lagrangians. In this
case $S^\alpha_{\bet\gamma}=0,$  so that the only nontrivial
algebraic condition from \eqref{EMcon} is
\begin{equation}\label{doug}
M_{\alpha\gamma} H^\gamma_\bet = M_{\bet\gamma} K^\gamma_\alpha.
\end{equation}
The differential condition is $d M_{\alpha\bet} = 0.$  Using
\eqref{hks} we calculate
\begin{equation*}
{\mathbf H}= {\mathbf K} = \left[ \begin{matrix} 0 & 1 \\ 1 & 0
\end{matrix} \right],
\end{equation*}
and it follows from \eqref{doug} that the only algebraic
constraint is $ M_{11} = M_{22}.$   Since the differential
condition is  $dM_{\alpha\bet} =0,$ differentiating
$M_{11}=M_{22}$ produces no additional algebraic conditions. The
most general multiplier in this case is
\begin{equation*}
\mathbf M = \left[ \begin{matrix} a & b \\ b & a \end{matrix} \right], \quad
a,\, b \in \mathbf R, \ \ a^2 - b^2 \neq 0.
\end{equation*}
The Lagrangian corresponding to the multiplier $\mathbf M$ is
given by
\begin{equation*}
L = -\dfrac{a}{2}( u_x u_y + v_x v_y + 2uv ) -  \dfrac{b}{2}( 2
u_x v_y + u^2 + v^2 ).
\end{equation*}
A routine calculation shows that the Euler-Lagrange equations for
$L$ are
\begin{equation*}
E_1(L)= a(u_{xy} - v)+ b(v_{xy} -u), \qquad  E_2(L)= b(u_{xy} -
v)+a(v_{xy} - u).
\end{equation*}
\end{exam}

\begin{exam}  Consider the system
\begin{equation}\label{chuckd}
u_{xy} = v, \qquad v_{xy} = xu.
\end{equation}
The only difference between this system and the one in the first
example is the $x$ in the second equation.   We will show that
\eqref{chuckd} admits a unique Lagrangian.  The initial algebraic
conditions \eqref{EMcon} on the multiplier $M_{\alpha\bet}$ reduce
to
\begin{equation}\label{rundmc}
M_{11} = x M_{22}.
\end{equation}
The differential condition is again $ d M_{\alpha\bet} = 0.$
Differentiating \eqref{rundmc} and substituting $d M_{\alpha\bet}
= 0$ results in $M_{22} dx = 0,$ implying that $M_{22}=M_{11}=0.$
Further differentiations add no new algebraic conditions.
Consequently the multiplier ${\mathbf M},$ where $M_{11}=M_{22}=0$
and $M_{12}=M_{21}=1,$  is unique up to scalar multiplication. The
Lagrangian for \eqref{chuckd} is $L= -u_x v_y - (u^2 + x v^2)/2.$
\end{exam}

\begin{exam}  For our third example, we again we make
a slight change on the first example \eqref{hopkins} to get a
system that is not variational.   Let
\begin{equation}
u_{xy} = v, \qquad v_{xy} = u_x.
\end{equation}
In this case the algebraic and differential conditions
\eqref{EMcon} on the multiplier $M_{\alpha\bet}$ are given by
\begin{equation}\label{Emoney}
M_{11}  = 0, \quad dM_{11} = 2 M_{12} \, dy, \quad dM_{12} = M_{22} \, dy,
\quad dM_{22} = 0.
\end{equation}
Differentiating the algebraic condition $M_{11}=0$  we find that
$M_{12} \, dy = 0,$ implying that $M_{12}=0.$  Differentiating
$M_{12}=0,$ it follows that $M_{22} \, dy =0.$   Consequently, the
only solution to \eqref{Emoney} is the trivial solution ${\mathbf
M} \equiv 0.$
\end{exam}

\begin{exam} Consider the system
\begin{equation}\label{connectex}
u^\alpha_{xy} + \Gamma^{ \ \alpha}_{ \bet \ \gamma} (u^\epsilon) u^\bet_x
u^\gamma_y = 0,
\end{equation}
where $\Gamma^{ \ \alpha}_{\bet \ \gamma}$ are the components of a
symmetric connection on an $m$-dimensional manifold ${\mathcal
M}.$  We will show that \eqref{connectex} is variational if and
only if $\boldsymbol \Gamma$ is a metric connection.  Since
$\boldsymbol \Gamma$ is symmetric we have
$S^\gamma_{\alpha\beta}=0$ and the Laplace invariants for
\eqref{connectex} are given by $ H^\gamma_\alpha = R^{ \ \gamma}_{
\epsilon \ \alpha\sigma} u^\sigma_x u^\epsilon_y   \, $ and $\,
K^\gamma_\alpha = R^{ \ \gamma}_{ \sigma \ \alpha\epsilon}
u^\sigma_x u^\epsilon_y, \, $ where $ R^{ \ \alpha}_{ \epsilon \
\gamma\bet} $ are the components of the curvature tensor
associated to the connection $\boldsymbol \Gamma.$   Using
properties of the curvature tensor, we can show that the
conditions \eqref{EMcon}  on the multiplier are
\begin{equation} \label{curvaturecond}
M_{\alpha\gamma}  R^{ \ \gamma}_{\bet \ \epsilon\sigma} + M_{\bet\gamma} R^{\
\gamma}_{\alpha \ \epsilon\sigma} = 0, \qquad  dM_{\alpha\bet} = \left(
M_{\alpha\gamma} \Gamma^{ \ \gamma}_{\bet \ \epsilon} + M_{\bet\gamma}
\Gamma^{\ \gamma}_{\alpha \ \epsilon} \right) \, du^\epsilon.
\end{equation}
We see immediately from the differential condition in
\eqref{curvaturecond} that any multiplier $M_{\alpha\bet}$ must
satisfy ${\partial M_{\alpha\bet}} / {\partial x} = 0$ and
${\partial M_{\alpha\bet}}/{\partial y} = 0.$ Consequently,
$M_{\alpha\bet}=M_{\alpha\bet} (u^\epsilon).$ It follows that the
differential condition on the multiplier simplifies to
$\nabla_\gamma M_{\alpha\bet} = 0,$ where $\nabla_\gamma$ denotes
covariant differentiation with respect to $u^\gamma.$ This proves
that $\boldsymbol \Gamma$ is the Levi-Cevita connection for the
metric $M_{\alpha\bet}.$ Subsequent differentiations of the
algebraic condition \eqref{curvaturecond} imply that
$M_{\alpha\bet}$ must satisfy $M_{\alpha\gamma} \nabla_I \,
R^\gamma_{ \ \epsilon\bet\sigma} = M_{\bet\gamma} \nabla_I \,
R^\gamma_{ \ \sigma\alpha\epsilon},$ where $\nabla_I =
\nabla_{\alpha_1} \nabla_{\alpha_2} \dots \nabla_{\alpha_k}.$  We
remark that if $({\mathcal M}, \boldsymbol \Gamma)$ is a (locally)
symmetric space, then the algebraic condition
\eqref{curvaturecond} involving the curvature completely
determines the number of linearly independent metrics for the
given connection.
\end{exam}

%
%

\begin{exam}  Consider the system of differential
equations
\begin{equation}\label{LieAlgEx}
u^\alpha_{xy} + C^\alpha_{\bet\gamma} u^\bet_x u^\gamma_y = 0,
\end{equation}
where $C^\alpha_{\bet\gamma} \in \mathbf R$ are the structure
constants of an $m$ dimensional Lie algebra $\mathfrak g.$   The
constants $C^\alpha_{\bet\gamma}$ are skew symmetric in the lower
indices and satisfy the Jacobi identity.   Using \eqref{hks} we
calculate the invariants
\begin{gather*}
H^\gamma_\alpha = (C^\gamma_{\epsilon\sigma} C^\sigma_{\alpha\tau} -
C^\gamma_{\alpha\sigma} C^\sigma_{\epsilon\tau} ) u^\epsilon_x u^\tau_y \, ,
\qquad K^\gamma_\alpha = (C^\gamma_{\sigma\tau} C^\sigma_{\epsilon\alpha} -
C^\gamma_{\sigma\alpha} C^\sigma_{\epsilon\tau} ) u^\epsilon_x u^\tau_y \, ,
\\
S^\gamma_{\alpha\bet} = C^\gamma_{\alpha\bet} - C^\gamma_{\bet\alpha} = 2
C^\gamma_{\alpha\bet}.
\end{gather*}
Consequently, the algebraic conditions \eqref{EMcon} can be
summarized as
\begin{subequations}\label{algcondLie}
\begin{gather}\label{LieHK}
M_{\alpha\gamma} \left(C^\gamma_{\epsilon\sigma} C^\sigma_{\bet\tau} -
C^\gamma_{\bet\sigma} C^\sigma_{\epsilon\tau} \right)  =
M_{\bet\gamma}\left(C^\gamma_{\sigma\tau} C^\sigma_{\epsilon\alpha} -
C^\gamma_{\sigma\alpha} C^\sigma_{\epsilon\tau} \right),
\\ \label{torsoLie}
M_{\alpha\gamma} C^\gamma_{\bet\epsilon} + M_{\bet\gamma}
C^\gamma_{\alpha\epsilon} = 0.
\end{gather}
\end{subequations}
The differential condition \eqref{dMomega} on $M_{\alpha\bet}$ is
$ dM_{\alpha\bet} = (M_{\alpha\sigma} C^\sigma_{\bet\gamma} +
 M_{\bet\sigma} C^\sigma_{\alpha\gamma})du^\gamma,
$ so that $dM_{\alpha\bet}=0$ as a result of \eqref{torsoLie}. It is easy to
check that \eqref{torsoLie} implies \eqref{LieHK}. It follows that there is a
Lagrangian for \eqref{LieAlgEx} with multiplier $M_{\alpha\bet}$ if and only if
$M_{\alpha\bet}$ is constant and $M_{\alpha\bet}$ satisfies equation
\eqref{torsoLie}.  Moreover, the Lagrangian is given by
\begin{equation*}
L= -\frac{1}{6} \, M_{\alpha\bet}  \, \left(3 u^\alpha_x u^\bet_y   -  2
C^\alpha_{\epsilon\tau}  u^\bet u^\epsilon_x u^\tau_y \right).
\end{equation*}
We remark that \eqref{torsoLie} is exactly the same as the condition for the
existence of a bi-invariant symmetric bilinear form for a Lie algebra
$\mathfrak g$ with structure constants $C^\alpha_{\bet\gamma}.$ If $\mathfrak
g$ is semi-simple, the Killing form provides us with a non-degenerate solution
to \eqref{torsoLie}. Consequently, \eqref{LieAlgEx} is variational whenever
$\mathfrak g$ is semi-simple. Moreover, the number of solutions to
\eqref{torsoLie} is equal to the dimension of the Lie algebra cohomology space
$H_3(\mathfrak g).$ If $\mathfrak g$ is simple, then $\dim H_3(\mathfrak g)=1$
and the Killing form determines the only non-degenerate solution to
\eqref{torsoLie} up to a scalar multiple (See \cite{Koszul1950}, Theorems 11.1,
11.2). We remark that semi-simplicity is not a necessary condition for
\eqref{torsoLie} to hold, as there are solvable Lie algebras which also admit
bi-invariant bilinear forms.  For example, consider the solvable
$4$-dimensional Lie algebra $\mathfrak g$ of consisting of real matrices of the
form
\begin{equation*}
{\mathbf A} = \left[ \begin{matrix} 0 & a_1 & a_2 \\ 0 & a_3 & a_4 \\ 0 & 0 & 0
\end{matrix} \right].
\end{equation*}
It is easy to check that the bilinear map ${\mathbf M}:{\mathfrak g} \times
{\mathfrak g} \to {\mathbf R}$ defined by
\begin{equation*}
M(\mathbf A,\mathbf B) = \lambda a_3 b_3 + \mu ( a_2 b_3  + a_3 b_2 - a_1 b_4 -
a_4 b_1)
\end{equation*}
is  non-degenerate and bi-invariant for all $\mu \neq 0$ and all $\lambda \in
{\mathbf R}.$
\end{exam}

%
%
%
%
%
%

\section{Classification of Variational Systems in Two Dependent
Variables}

In this section we establish a result characterizing the variational systems
\begin{equation}\label{11gordies}
u_{xy}^\alpha = f^\alpha(x,y,u^\gamma,u^\gamma_x,u^\gamma_y),
\end{equation}
of two equations and two dependent variables that admit multiple
Lagrangians. In order to proceed, we need to make precise two
concepts that are paramount to our discussion.  We first define
what it means for a system to have multiple Lagrangians.  Then we
review the notion of contact equivalence of two systems of
differential equations \eqref{11gordies}.

We say that a system of differential equations \eqref{11gordies}
\emph{admits $k$ Lagrangians} if there exists a set of linearly
independent Lagrangians $\{L_1,$ $L_2, \dots, L_k \}$ and a set of
linearly independent variational multipliers $\{ M^1_{\alpha\bet},
\dots, M^k_{\alpha\bet}\},$ such that $E_\alpha(L_i) =
M^i_{\alpha\bet} (u^\bet_{xy} - f^\bet)$ for $i=1,\dots,k.$  We
say  that two $f$-Gordon systems \eqref{11gordies} are
\emph{contact equivalent} if there exists a local diffeomorphism
\begin{equation*}
\Phi:J^2({\mathbf R}^2, {\mathbf R}^m) \to J^2({\mathbf R}^2,
{\mathbf R}^m),
\end{equation*}
where $J^2({\mathbf R}^2,{\mathbf
R}^m)$ denotes the second-order jet-bundle of local sections
$s:{\mathbf R}^2 \to {\mathbf R}^m,$ such that
\begin{equation*}
\Phi^* (\bar u^\alpha_{\bar x \bar y} - \bar{f}^\alpha) = Q^\alpha_\gamma
(u^\gamma_{xy} - f^\gamma),
\end{equation*}
and $\Phi^* \bar {\mathcal C} \subset {\mathcal C},$ where ${\mathcal C}$ is
the ideal generated by the 1-forms
\begin{equation*}
du^\alpha - u_x^\alpha \, dx - u_y^\alpha \, dy, \quad du^\alpha_x
- u_{xx}^\alpha \, dx - u_{xy}^\alpha \, dy, \quad du^\alpha_y -
u^\alpha_{xy} \, dx - u^\alpha_{yy}\, dy.
\end{equation*}
It was shown in \cite{mydiss} that any contact equivalence $\Phi$
of two systems of the form \eqref{11gordies} is the prolongation
of a fiber preserving transformation
\begin{equation}\label{transtype}
\bar x = A(x), \quad \bar y = B(y), \quad \bar u^\alpha =
C^\alpha(x,y,u^\gamma),
\end{equation}
up to an interchange $x \leftrightarrow  y$ of the independent
variables.

We have the following theorem that completely characterizes the
variational $f$-Gordon systems \eqref{11gordies} in two dependent
variables which admit two or more inequivalent Lagrangians.
\begin{theorem}\label{TheoClassify}  Let $\calr$ denote the system
\begin{equation} \label{2by2system}
u_{xy} = f(x,y,u,v,u_x,v_x,u_y,v_y), \qquad v_{xy} =
g(x,y,u,v,u_x,v_x,u_y,v_y).
\end{equation}

\smallskip

\noindent 1) $\calr$ admits three Lagrangians if and only if $\calr$ is contact
equivalent to a system
\begin{equation*}
u_{xy} = \lambda(x,y) u,  \qquad  v_{xy} = \lambda(x,y) v.
\end{equation*}

\smallskip

\noindent 2)  $\calr$ admits two Lagrangians if and only if $\calr$ is contact
equivalent to a system
\begin{equation*}
u_{xy} = W_v(x,y,u,v), \qquad v_{xy} = W_u(x,y,u,v),
\end{equation*}
where $W$ satisfies one of $W_{uu} + W_{vv}=0,$ $W_{uu}=W_{vv},$ or $W_{vv} =
0.$
\end{theorem}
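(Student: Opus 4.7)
The plan is to reduce the classification to an algebraic problem about pairs or triples of symmetric $2\times 2$ matrices using Propositions~\ref{LnormalF} and~\ref{PropMcond}, then invoke the contact equivalence result cited from \cite{mydiss} to bring the resulting normal form to the explicit models in the statement. Throughout I would work in the normal form \eqref{normF}, so that multipliers $M_{\alpha\beta}$ lie in the three-dimensional space of symmetric $2\times 2$ matrices; three Lagrangians corresponds to the whole space being a solution, while two Lagrangians corresponds to a two-dimensional subspace.

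For part (1), I would rewrite the algebraic condition $M_{\alpha\sigma}H^\sigma_\beta=M_{\beta\sigma}K^\sigma_\alpha$ in matrix form as $MH=K^TM$. Requiring this for all symmetric $M$ and taking $M=I$ gives $H=K^T$, after which $MH=HM$ for every symmetric $M$ forces $H=\lambda I$ and hence $K=\lambda I$. A parallel argument applied to $M_{\alpha\sigma}S^\sigma_{\beta\gamma}+M_{\beta\sigma}S^\sigma_{\alpha\gamma}=0$ shows $S=0$, so $C^\alpha_{\beta\gamma}$ is symmetric in its lower indices. Feeding $H=K=\lambda I$ back into \eqref{hks}, together with the differential condition $dM=M\Omega+\Omega^TM$ for $M=I$ (which yields $\Omega+\Omega^T=0$), pins down enough first-order data that a fiber-preserving transformation \eqref{transtype} combined with a linear change among $(u,v)$ absorbs $A,B,C$ entirely, leaving the system $u_{xy}=\lambda u$, $v_{xy}=\lambda v$. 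The converse is an immediate computation: for such a system $H=K=\lambda I$, $S=0$, $\Omega=0$, so every constant symmetric $M$ is a multiplier.

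For part (2), I would take a two-dimensional solution space spanned by $M^1,M^2$, both functions of $(x,y,u,v)$ only. Under the $GL(2,\mathbb R)$ action on $(u,v)$ that forms part of \eqref{transtype}, together with linear reparametrization of the pair $(M^1,M^2)$, the pencil $M^1+tM^2$ in $\mathrm{Sym}(\mathbb R^2)$ has three normal forms determined by the discriminant of $\det(M^1+tM^2)$ being positive, negative, or zero; equivalently, the matrix $J=(M^1)^{-1}M^2$ has real distinct eigenvalues, complex conjugate eigenvalues, or a nontrivial Jordan block. In each case I would insert the canonical pair into \eqref{MHMK} to determine $H,K$, then use \eqref{dMomega} to constrain the connection $\Omega$. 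A direct check against the model $u_{xy}=W_v$, $v_{xy}=W_u$ confirms the three cases: for such a system $H=K=\bigl(\begin{smallmatrix}W_{uv}&W_{vv}\\W_{uu}&W_{uv}\end{smallmatrix}\bigr)$, $S=0$, $\Omega=0$, and a constant symmetric $M=\bigl(\begin{smallmatrix}a&b\\b&c\end{smallmatrix}\bigr)$ is a multiplier if and only if $aW_{vv}=cW_{uu}$; the two-dimensional solution spaces in the cases $W_{uu}=W_{vv}$, $W_{uu}+W_{vv}=0$, and $W_{vv}=0$ match the three pencil types, and in each none of the further constraints that would collapse the dimension to three arise, so we get exactly two Lagrangians.

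The main obstacle will be constructing the explicit fiber-preserving transformations \eqref{transtype} reducing the abstract algebraic structure to the concrete normal forms, particularly in part (1): one must show that the three-parameter family of multipliers, combined with the differential condition \eqref{dMomega}, leaves enough freedom in \eqref{transtype} to absorb all of $A,B,C$ and reduce to a single function $\lambda(x,y)$. For part (2), a secondary difficulty is handling degeneracies in the pencil: although $M^1,M^2$ are linearly independent as sections, they may fail pointwise invertibility on a closed subset, so one argues on the open dense set where the classification applies and then extends by continuity. The converse claims, that each $W$-type system genuinely admits the stated number of Lagrangians, follow by direct application of Theorem~\ref{inversesolution}.
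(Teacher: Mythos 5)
Your overall strategy matches the paper's: use Propositions \eqref{LnormalF} and \eqref{PropMcond} to convert the question into linear algebra on symmetric $2\times 2$ matrices, then normalize by contact equivalence. But there are two genuine gaps. The first and largest is the reduction from the invariant conditions to the concrete normal forms. The paper isolates this as Lemma \eqref{PropHeqK}: ${\mathbf H}={\mathbf K}$ and ${\mathbf S}=0$ hold if and only if the system is contact equivalent to $u^\alpha_{xy}=g^\alpha(x,y,u^\gamma)$, and ${\mathbf H}={\mathbf K}=\lambda{\mathbf I}$ forces $\lambda=\lambda(x,y)$ and the form $u^\alpha_{xy}=\lambda u^\alpha$. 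Its proof requires two separate Frobenius integrations (equations \eqref{Ctrans} to kill the quadratic terms $C^\alpha_{\bet\gamma}$ and \eqref{Ncond} to kill $A^\alpha_\gamma,B^\alpha_\gamma$), whose integrability conditions are shown to be exactly ${\mathbf H}={\mathbf K}$ via \eqref{hkquad} and \eqref{hminusk}. You flag this as ``the main obstacle'' but supply no argument, and your proposed shortcut is unsound: you cannot set $M=I$ in the differential condition \eqref{dMomega} to conclude $\Omega+\Omega^T=0$, because before the reduction the solutions $M^i_{\alpha\bet}$ are non-constant functions of $(x,y,u^\gamma)$; the fact that their values span $\mathrm{Sym}(2)$ pointwise lets you plug arbitrary symmetric matrices into the \emph{algebraic} conditions \eqref{MHMK}, but not into \eqref{dMomega}, which constrains the derivatives of the actual solutions. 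This same issue undercuts your part (2): the pencil classification by a constant $GL(2,\mathbb R)$ change of variables only makes sense after you know $dM_{\alpha\bet}=0$, which in the paper follows only after the Lemma \eqref{PropHeqK} reduction to $u_{xy}=F(x,y,u,v)$, $v_{xy}=G(x,y,u,v)$ makes $\Omega=0$.

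The second gap is in the forward direction of part (2). The reason a two-dimensional multiplier space forces the gradient structure $F=W_v$, $G=W_u$ is that the pencil must contain an \emph{indefinite} nondegenerate element, which can then be normalized to $\left(\begin{smallmatrix}0&1\\1&0\end{smallmatrix}\right)$; substituting into \eqref{simpalg} gives $G_v=F_u$ and hence the potential $W$. The paper proves the existence of an indefinite element by the explicit discriminant computation \eqref{polym}--\eqref{discrim} showing $\det({\mathbf M}_1-\mu{\mathbf M}_2)$ always has a real root when both determinants are positive. Your classification of pencils by the sign of the discriminant of $\det(M^1+tM^2)$ does correctly mirror the three final cases $W_{uu}+W_{vv}=0$, $W_{uu}=W_{vv}$, $W_{vv}=0$, but it presupposes rather than establishes that an indefinite member exists in every case, and that is precisely the step that produces $W$ in the first place. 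Your verification of the converse (that each $W$-model admits exactly two constant multipliers via $aW_{vv}=cW_{uu}$) is correct and agrees with the paper.
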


\begin{remark}  According to Proposition \eqref{PropMcond} and Remark
\eqref{RMcon}, a system \eqref{2by2system} admits at most three
Lagrangians.   Moreover, since the dimension of the vector space
of symmetric $m \times m$ matrices is $m(m+1)/2,$ a system
\eqref{11gordies} of $m$ equations admits at most $m(m+1)/2$
Lagrangians.  If a system of $m$ equations \eqref{11gordies}
admits the maximal number of Lagrangians, then it can be shown the
given system is contact equivalent to a system $u^\alpha_{xy} =
\lambda(x,y) u^\alpha.$
\end{remark}

The proof of the Theorem \eqref{TheoClassify} depends on the
following lemma
 that completely characterizes the $f$-Gordon
equations $u^\alpha_{xy} = g^\alpha(x,y,u^\gamma)$ up to contact
equivalence. The proof of Lemma \eqref{PropHeqK} is quite tedious
and is delayed until after the proof of Theorem
\eqref{TheoClassify}.

\begin{lemma}\label{PropHeqK}   A system of partial differential equations
$u^\alpha_{xy}=f^\alpha(x,y,u^\gamma,u^\gamma_x,u^\gamma_y)$  is
contact equivalent to a system
$u^\alpha_{xy}=g^\alpha(x,y,u^\gamma)$ if and only if ${\mathbf H}
= {\mathbf K}$ and $S^\alpha_{\bet\gamma} = 0.$   If the number of
dependent variables $m>1$ and  ${\mathbf H}={\mathbf K}=\lambda
{\mathbf I},$ then $\lambda=\lambda(x,y)$ and the given system is
contact equivalent to a system $u^\alpha_{xy} = \lambda(x,y)
u^\alpha.$
\end{lemma}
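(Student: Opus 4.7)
For necessity, substitute $f^\alpha = g^\alpha(x,y,u^\gamma)$ into (2.5a)--(2.5c): every derivative of $f$ with respect to $u^\beta_x$ or $u^\beta_y$ vanishes, so $H^\gamma_\alpha = K^\gamma_\alpha = \partial g^\gamma/\partial u^\alpha$ and $S^\alpha_{\beta\gamma} = 0$; the conditions are preserved under contact equivalence since $H$, $K$, $S$ are relative invariants of the contact pseudogroup preserving (1.1).

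For sufficiency, begin by viewing $H^\gamma_\alpha = K^\gamma_\alpha$ as an identity on the 2-jet. Because $H^\gamma_\alpha$ is linear in $u^\beta_{xx}$ with coefficient $-\partial^2 f^\gamma/\partial u^\alpha_x\,\partial u^\beta_x$, while $K^\gamma_\alpha$ has no $u_{xx}$-dependence, the coefficient of $u^\beta_{xx}$ in $H^\gamma_\alpha-K^\gamma_\alpha$ must vanish, giving $\partial^2 f^\gamma/\partial u^\alpha_x\,\partial u^\beta_x = 0$, and symmetrically $\partial^2 f^\gamma/\partial u^\alpha_y\,\partial u^\beta_y = 0$. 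Combined with $S^\gamma_{\alpha\beta}=0$, this forces $f$ into the semi-linear form
\[
f^\gamma = T^\gamma_{\beta\delta}(x,y,u)\,u^\beta_x u^\delta_y + P^\gamma_\beta(x,y,u)\,u^\beta_x + Q^\gamma_\beta(x,y,u)\,u^\beta_y + R^\gamma(x,y,u),
\]
with $T$ symmetric in $(\beta,\delta)$.

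Next, compute the image of $f^\alpha$ under a fiber-preserving transformation $\bar x = A(x)$, $\bar y = B(y)$, $\bar u^\alpha = C^\alpha(x,y,u)$. The chain rule gives
\[
A'(x)B'(y)\,\bar f^\alpha = \frac{\partial^2 C^\alpha}{\partial x\,\partial y} + \frac{\partial^2 C^\alpha}{\partial x\,\partial u^\gamma}u^\gamma_y + \frac{\partial^2 C^\alpha}{\partial y\,\partial u^\gamma}u^\gamma_x + \frac{\partial^2 C^\alpha}{\partial u^\beta\,\partial u^\gamma}u^\beta_x u^\gamma_y + \frac{\partial C^\alpha}{\partial u^\gamma}f^\gamma.
\]
Because $(\bar u^\alpha_{\bar x},\bar u^\alpha_{\bar y})$ is invertibly affine in $(u^\alpha_x,u^\alpha_y)$, requiring $\bar f^\alpha$ to be independent of the barred derivatives is equivalent to the coefficients of $u^\beta_x u^\delta_y$, $u^\beta_x$, and $u^\beta_y$ on the right-hand side vanishing. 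Writing $V^\alpha_\gamma = \partial C^\alpha/\partial u^\gamma$ and taking $A$, $B$ to be the identity, this reduces to the Pfaffian system
\[
dV^\alpha_\gamma + V^\alpha_\sigma\bigl(T^\sigma_{\beta\gamma}\,du^\beta + Q^\sigma_\gamma\,dx + P^\sigma_\gamma\,dy\bigr) = 0.
\]
The core of the proof is then a direct computation showing that the Frobenius integrability condition $d^2 V^\alpha_\gamma = 0$ for this system is precisely the content of $\mathbf{H} = \mathbf{K}$ (given $S = 0$). This is the main technical obstacle: identifying the curvature tensor of the connection above with $\mathbf{H} - \mathbf{K}$ requires careful bookkeeping of every term in (2.5a)--(2.5b). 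Once this is in place, the Frobenius theorem yields $m$ functionally independent solutions $C^1,\ldots,C^m$ near any point, giving the required change of variables.

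For the second statement, apply the first part to reduce to $\bar u^\alpha_{\bar x\bar y} = g^\alpha(\bar x,\bar y,\bar u)$, whence $\partial g^\gamma/\partial \bar u^\alpha = \lambda\,\delta^\gamma_\alpha$. Equating the mixed partials $\partial^2 g^\gamma/(\partial \bar u^\alpha\,\partial \bar u^\beta)$ and using $m>1$ forces $\partial\lambda/\partial \bar u^\sigma = 0$ for all $\sigma$, so $\lambda=\lambda(\bar x,\bar y)$ and $g^\alpha = \lambda\,\bar u^\alpha + \mu^\alpha(\bar x,\bar y)$. A further translation $\tilde u^\alpha = \bar u^\alpha - \nu^\alpha(\bar x,\bar y)$, with $\nu^\alpha$ solving the hyperbolic Cauchy problem $\nu^\alpha_{\bar x\bar y} - \lambda\,\nu^\alpha = \mu^\alpha$, completes the reduction to $\tilde u^\alpha_{\bar x\bar y} = \lambda(\bar x,\bar y)\,\tilde u^\alpha$.
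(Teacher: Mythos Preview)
Your approach is correct and differs from the paper's in how the sufficiency direction is organized. The paper carries out the reduction in three stages: first a fiber transformation $u^\alpha=g^\alpha(x,y,\bar u)$ is chosen to kill the quadratic coefficients $C^\alpha_{\beta\gamma}$, with integrability identified as the $u_x u_y$-part of $\mathbf H-\mathbf K$; next, on the reduced system (with $C=0$) the linear-in-$u_x,u_y$ part of $\mathbf H-\mathbf K$ is shown to force $A^\alpha_\gamma,B^\alpha_\gamma$ to be independent of $u$; finally a linear change $u^\alpha\to N^\alpha_\gamma(x,y)\,u^\gamma$ eliminates $A$ and $B$, with integrability coming from the constant part of $\mathbf H-\mathbf K$. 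You instead package all three stages into the single connection $\omega^\sigma_\gamma=T^\sigma_{\beta\gamma}\,du^\beta+Q^\sigma_\gamma\,dx+P^\sigma_\gamma\,dy$ and identify its curvature with $\mathbf H-\mathbf K$ in one stroke. This is more conceptual and bypasses the intermediate observation that $A,B$ lose their $u$-dependence, at the cost of a larger one-time curvature computation (the $du\wedge du$, $du\wedge dx$, $du\wedge dy$, $dx\wedge dy$ components of $d\omega-\omega\wedge\omega$ must be matched to the bilinear, linear, and constant parts of $\mathbf H-\mathbf K$).

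One small point to tighten: Frobenius applied to $dV+V\omega=0$ produces an invertible matrix $V^\alpha_\gamma(x,y,u)$, not directly the functions $C^\alpha$. Passing from $V$ to $C$ with $\partial C^\alpha/\partial u^\gamma=V^\alpha_\gamma$ requires the separate check that $\partial_\beta V^\alpha_\gamma=-V^\alpha_\sigma T^\sigma_{\beta\gamma}$ is symmetric in $\beta,\gamma$; this is exactly where the symmetry of $T$ (i.e.\ $S=0$) enters. Once that is noted, $C^\alpha$ is obtained by integrating in $u$, unique up to an additive function of $(x,y)$ which is irrelevant. Your sentence ``the Frobenius theorem yields $m$ functionally independent solutions $C^1,\ldots,C^m$'' elides this short extra step.

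Your treatment of the second statement matches the paper's.
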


\noindent{\it{Proof of Theorem 4.1.}} We first show that if a
system \eqref{2by2system} admits multiple Lagrangians, then
\eqref{2by2system} satisfies the hypotheses of Lemma
\eqref{PropHeqK} and is contact equivalent to a system of the form
\begin{equation}\label{simplesystem}
u_{xy} = F(x,y,u,v), \qquad v_{xy} = G(x,y,u,v).
\end{equation}
According to Proposition \eqref{PropMcond}, if \eqref{2by2system}
admits a first-order Lagrangian $L$ with a symmetric multiplier
$M_{\alpha\bet},$ then $M_{\alpha\bet}$ satisfies the algebraic
conditions
\begin{equation}\label{dookie}
M_{\alpha\gamma} H^\gamma_\bet=M_{\bet\gamma} K^\gamma_\alpha,
\qquad M_{\alpha\sigma} S^\sigma_{\bet\gamma}+ M_{\bet\sigma}
S^\sigma_{\alpha\gamma} = 0,
\end{equation}
where $H^\alpha_\gamma, \ K^\gamma_\alpha,$ and
$S^\alpha_{\bet\gamma}$ are given by \eqref{hks}.  Multiplying the
second equation of \eqref{dookie} by $M^{\alpha\bet}$ yields
\begin{equation}\label{Sbye}
0=M^{\alpha\bet} M_{\alpha\sigma} S^\sigma_{\bet\gamma}+ M^{\alpha\bet}
M_{\bet\sigma} S^\sigma_{\alpha\gamma} = \delta^\bet_\sigma
S^\sigma_{\bet\gamma} + \delta_\alpha^\sigma S^\sigma_{\alpha\gamma} = 2
S^\alpha_{\alpha\gamma}.
\end{equation}
If their are only two dependent variables, then equation
\eqref{Sbye} implies that $S^1_{12}=S^2_{12}= 0.$ Since
$S^\alpha_{\bet\gamma}=-S^\alpha_{\gamma\bet},$ we have
$S^\alpha_{\bet\gamma}=0$ for all $\alpha,\beta,\gamma=1,2.$  Then
the algebraic conditions \eqref{dookie} can be expressed as
\begin{equation}\label{systemformults}
{\mathbf A} \cdot \left[
\begin{matrix}  M_{11} \\ M_{12} \\ M_{22}  \end{matrix} \right] =
{\mathbf 0}, \qquad   {\mathbf A} =
\left[ \begin{matrix}  H^1_1 - K_1^1   &  H^2_1 - K^2_1  &  0 \\
0  &  H^1_2 - K_2^1  & H^2_2 - K^2_2 \\  H^1_2 & H^2_2 - K^1_1 & -K^2_1  \\
K^1_2  &  K^2_2 -  H^1_1  &  -H_1^2 \end{matrix} \right]  .
\end{equation}
If \eqref{11gordies} admits 2 or more inequivalent Lagrangians,
then there are least two linearly  independent solutions to
\eqref{systemformults} and it follows that the rank of ${\mathbf
A}$ is at most one.   Moreover, the rank of $\mathbf A$ is one or
less if and only if ${\mathbf H} = {\mathbf K}$ and is rank zero
if and only if ${\mathbf H}={\mathbf K} = \lambda I.$   From Lemma
\eqref{PropHeqK}, we deduce that $\text{Rank} \ {\mathbf A} \leq
1$ if and only if \eqref{2by2system} is contact equivalent to
\eqref{simplesystem}.

We will complete the proof of the theorem by analyzing the
algebraic conditions \eqref{systemformults} for systems of the
form \eqref{simplesystem}.  A calculation of ${\mathbf H}$ and
${\mathbf K}$ for \eqref{simplesystem} reveals that
\begin{equation*}
{{\mathbf H}} = {{\mathbf K}} = \left[ \begin{matrix}  F_u &  F_v \\
G_u & G_v
\end{matrix} \right].
\end{equation*}
In this case the algebraic conditions \eqref{systemformults} for
the existence of a multiplier $M_{\alpha\bet}$  simplify to
\begin{equation}\label{simpalg}
F_v M_{11}  + (G_v - F_u) M_{12}   -  G_u  M_{22} = 0
\end{equation}
The differential condition \eqref{dMomega} reduces to
$dM_{\alpha\bet} = 0.$ Consequently, solving the variational
multiplier problem for \eqref{simplesystem} is equivalent to
determining all constant solutions $M_{\alpha\bet}$ to the
equation \eqref{simpalg}.

There are 3 linearly independent solutions to \eqref{simpalg} if
and only if ${\mathbf H}={\mathbf K} = \lambda {\mathbf I}.$ In
this case \eqref{simplesystem} is contact equivalent to a system
\begin{equation}\label{3lagcyst}
u_{xy} = \lambda(x,y) \, u, \qquad v_{xy} = \lambda(x,y) \, v.
\end{equation}
The most general Lagrangian for \eqref{3lagcyst} is  a linear
combination of the Lagrangians $L_1 =  u_{x} u_y  +  \lambda u^2,
\, $ $L_2 = u_x v_y + \lambda uv, \, $ and $L_3 = v_x v_y +
\lambda v^2.$

We  analyze the case where the rank of $\mathbf A$ is exactly one
and we assume there are two non-degenerate, linearly independent,
constant solutions $M^1_{\alpha\bet}$ and $M^2_{\alpha\bet}$ to
\eqref{simpalg}.   We claim there exists an indefinite multiplier
${\mathbf M}= (M_{\alpha\bet})$ that satisfies \eqref{simpalg}. If
one of $\det {\mathbf M}_1<0$ or $\det {\mathbf M}_2<0,$ then we
are done, so we assume that $\det {\mathbf M}_1  > 0$ and $\det
{\mathbf M}_2 > 0.$   If
\begin{equation}\label{m1m2}
{\mathbf M}_1 = \left[ \begin{matrix} a & b \\ b & c \end{matrix}\right],
\qquad  {\mathbf M}_2 = \left[ \begin{matrix} p & q \\ q & r
\end{matrix}\right],
\end{equation}
then it follows that $ac > 0$ and $pr >  0.$   We claim there is a
scalar $\mu$ such that $\det ( {\mathbf M_1} - \mu {\mathbf M_2} )
< 0.$ Indeed, if
\begin{equation}\label{polym}
 P(\mu) = \det ( {\mathbf M}_1 - \mu {\mathbf M}_2 ) = (\det
{\mathbf M}_2)^2 \, \mu^2  +  (2bq -ar- pc ) \, \mu   +  (\det {\mathbf
M}_1)^2,
\end{equation}
then the discriminant $\Delta$ can be expressed as
\begin{equation}\label{discrim}
\Delta = \dfrac{(aq-bp)^4 + 2 (p^2 \det {\mathbf M}_1 + a^2 \det {\mathbf
M}_2)(aq - bp)^2   + (a^2 {\mathbf M}_2 - p^2 {\mathbf M}_1)^2 }{a^2 p^2}.
\end{equation}
It follows from \eqref{m1m2} and \eqref{discrim} that $\Delta \geq
0$ with equality holding if and only if ${\mathbf M}_1 = t \,
{\mathbf M}_2$ for some $t \in {\mathbf R}.$  Consequently, the
polynomial  \eqref{polym} has two real roots and there exists
$\mu \in {\mathbf R}$ such that $\det ( {\mathbf M}_1 - \mu
{\mathbf M}_2 ) < 0.$  Now we have established that if there are
two independent solutions to \eqref{simpalg}, with at least one of
the solutions non-degenerate, then there is an indefinite
multiplier $M_{\alpha\bet}.$

If we make a linear change of variables $u^\alpha =
T^\alpha_\gamma \bar u^\gamma,$  then a direct calculation of the
Euler-Lagrange equations for the Lagrangian $L$ and the
transformed Lagrangian $\bar L$ verifies that the corresponding
variational multipliers transform according to the rule
\begin{equation}\label{Mtransrule}
M_{\alpha\beta} = T^\sigma_\alpha T^\tau_\bet \bar M_{\sigma\tau}.
\end{equation}
After a linear change of variables $u^\alpha \to T^\alpha_\gamma
u^\gamma,$  we may then assume that the indefinite multiplier
$M_{\alpha\bet}$ is of the form
\begin{equation}\label{thisM}
\mathbf M= \left( \begin{matrix}  0  & 1 \\ 1  &  0
\end{matrix} \right).
\end{equation}
Substituting \eqref{thisM} into \eqref{simpalg} implies that $G_v
- F_u= 0.$  As a consequence of the de Rham theorem, we see there
exists a smooth function $W(x,y,u,v)$ such that
\begin{equation}\label{WvF}
W_v = F, \qquad   W_u = G.
\end{equation}
There is a second multiplier ${\mathbf N},$ independent of
${\mathbf M},$ satisfying \eqref{simpalg}.  We may assume,
possibly after subtracting a scalar multiple of $\mathbf M,$ that
\begin{equation*}
\mathbf N = \left( \begin{matrix}  a  & 0 \\ 0  &  b
\end{matrix} \right).
\end{equation*}
According to \eqref{Mtransrule}, a linear transformation $u \to
\lambda u,$ $v \to
 \lambda^{-1} v$ preserves $\mathbf M$ and transforms
${\mathbf N}$ as
\begin{equation*}
\mathbf N \to  \left( \begin{matrix}  a \lambda^2  & 0 \\ 0  &
 b\lambda^{-2}
\end{matrix} \right).
\end{equation*}
We may then assume,  possibly after a scaling ${\mathbf N} \to k {\mathbf N},$
that ${\mathbf N}$ has the form
\begin{equation*}
\mathbf N= \left( \begin{matrix}  1  & 0 \\ 0  &  \varepsilon
\end{matrix} \right),
\end{equation*}
where $\varepsilon=0,1,$ or $-1.$   Taking into account that
\eqref{WvF} holds, substituting ${\mathbf N}$  into
\eqref{simpalg} implies that
\begin{equation*}
W_{vv}  - \varepsilon W_{uu}  = 0.
\end{equation*}
If $\varepsilon=-1,$ then $W$ satisfies Laplace's equation $W_{uu} + W_{vv}=0$
and there exists a function $Z(x,y,u,v)$ such that $Z_u=W_v = F$ and $-Z_v =
W_u = G.$ The most general Lagrangian in this case is given by
\begin{equation*}
L=c_1 ( u_x  v_y  + W)  + c_2 ( u_x u_y - v_x v_y  + Z).
\end{equation*}
If $\varepsilon=0,$  then $W_{vv}=0$ and \eqref{simplesystem} can
be expressed
\begin{equation}\label{vvvroom}
u_{xy} = a'(u), \qquad  v_{xy} = a''(u) v + b'(u).
\end{equation}
The most general Lagrangian for \eqref{vvvroom} is
\begin{equation*}
L= c_1 [u_{x} u_y + 2 a(u)] + c_2 [ u_x v_y  + a'(u) v + b(u) ].
\end{equation*}
If $\varepsilon=1,$ then $W$ satisfies the wave equation and
$W=W_1(u+v) + W_2(u-v).$ The most general Lagrangian in the case
is
\begin{equation*}
L = c_1 [ u_x u_y  +  v_x v_y  +   2 W_1(u+v) - 2 W_2(u-v) ]
  + c_2 [ u_x v_y  +  W_1(u+v) +  W_2(u-v) ].
\end{equation*}
This establishes the second statement of the theorem. \qed

\medskip

\noindent{\it{Proof of Lemma 4.3.}}  If we assume that two systems
${\bar u}_{\bar x \bar y} = {\bar f}^\alpha$ and $u_{xy} =
f^\alpha$ are contact equivalent with the change of coordinates
given by \eqref{transtype}, then the formulas \eqref{hks} and a
tedious application of the chain rule  will verify that the
transformation rules for ${\mathbf H},$ ${\mathbf K},$ and
${\mathbf S}$ are
\begin{equation}\label{transtheo}
\bar H^\alpha_\bet = \dfrac{1}{A'B'} \dfrac{\partial \bar u^\alpha}{\partial
u^\sigma}  \dfrac{\partial u^\tau}{\partial \bar u^\bet} \,  H^\sigma_\tau, \ \
\ \bar K^\alpha_\bet = \dfrac{1}{A'B'} \dfrac{\partial \bar u^\alpha}{\partial
u^\sigma}  \dfrac{\partial u^\tau}{\partial \bar u^\bet}  \, K^\sigma_\tau, \ \
\ \bar S^\alpha_{\bet\gamma} = \dfrac{\partial \bar u^\alpha}{\partial
u^\sigma} \dfrac{\partial u^\tau}{\partial \bar u^\beta} \dfrac{\partial
u^\epsilon}{\partial \bar u^\gamma} \, S^\sigma_{\tau\epsilon},
\end{equation}
where $({\partial  u^\alpha}/{\partial \bar u^\gamma})  \cdot (
{\partial \bar u^\gamma}/{\partial u^\bet}) = \delta^\alpha_\bet.$
It follows from \eqref{transtheo} that the conditions ${\mathbf H}
={\mathbf K}$ and ${\mathbf S}=0$ are invariant with respect to
transformation \eqref{transtype}.

For a system $u^\alpha_{xy} = g^\alpha(x,y,u^\gamma),$ using
\eqref{hks} we see that
\begin{equation}\label{hksforgu}
H^\alpha_\gamma = K^\alpha_\gamma = \dfrac{\partial g^\alpha}{\partial
u^\gamma}, \qquad \quad S^\alpha_{\bet\gamma} = 0.
\end{equation}
From \eqref{transtheo} and \eqref{hksforgu}, we deduce that any
system $u^\alpha_{xy} =
f^\alpha(x,y,u^\gamma,u^\gamma_x,u^\gamma_y)$ that is contact
equivalent to $u_{xy}^\alpha = g^\alpha(x,y,u^\gamma)$ must
necessarily have ${\mathbf H} = {\mathbf K}$ and
$S^\alpha_{\bet\gamma} = 0.$

We now prove that any system
\begin{equation}\label{fgord4}
u^\alpha_{xy} = f^\alpha(x,y,u^\gamma,u^\gamma_x,u^\gamma_y)
\end{equation}
with the property that ${\mathbf H}={\mathbf K}$ and
$S^\alpha_{\bet\gamma}=0$ is equivalent to a system $u_{xy}^\alpha
= g^\alpha(x,y,u^\gamma).$   It follows from \eqref{hks} that if
$H^\alpha_\gamma = K^\alpha_\gamma,$ then
\begin{equation*}
\dfrac{\partial^2 f^\alpha}{\partial u^\gamma_x \partial u^\bet_x} = 0, \quad
\dfrac{\partial^2 f^\alpha}{\partial u^\gamma_y  \partial u^\bet_y} = 0, \quad
\forall \, \alpha,\beta, \gamma = 1,2,\dots,m.
\end{equation*}
Consequently,  equation \eqref{fgord4} simplifies to an equation
of the form
\begin{equation}\label{nicef}
u^\alpha_{xy} + C^\alpha_{\bet\gamma} u^\bet_x u^\gamma_y + A^\alpha_\gamma
u^\gamma_x  + B^\alpha_\gamma u_y^\gamma + G^\alpha=0,
\end{equation}
where $C^\alpha_{\bet\gamma}, \, A^\alpha_\gamma, \,
 B^\alpha_\gamma,$ and $G^\alpha$ are functions of $x,y,$ and
$u^\epsilon.$   Moreover, $S^\alpha_{\bet\gamma} = 0$ if and only
if $C^\alpha_{\bet\gamma} = C^\alpha_{\gamma\bet}.$  With a
judicious choice of coordinates,  we will now eliminate the
quadratic terms of \eqref{nicef}. If we let $u^\alpha =
g^\alpha(x,y,\bar u^\gamma),$ then \eqref{nicef} transforms as
\begin{equation}
\dfrac{\partial g^\alpha}{\partial \bar u^\bet} \bar u^\bet_{xy} +
\left(\dfrac{\partial g^\alpha}{\partial \bar u^\bet
\partial \bar u^\gamma} - C^\alpha_{\epsilon\tau} (x,y,g^\tau)  \dfrac{\partial
g^\epsilon}{\partial \bar u^\bet}  \dfrac{\partial g^\tau}{\partial \bar
u^\gamma}  \right) \bar  u^\bet_x \bar u^\gamma_y + \bar A^\alpha_\gamma \bar
u^\gamma_x  +  \bar B^\alpha_\gamma \bar u^\gamma_x + \bar G^\alpha.
\end{equation}
We see that $\bar C^\alpha_{\bet\gamma} = 0$ whenever $g^\alpha$ satisfies
\begin{equation}\label{Ctrans}
\dfrac{\partial^2 g^\alpha}{\partial \bar u^\bet
\partial \bar u^\gamma} +  C^\alpha_{\epsilon \tau} (x,y,g^\tau)  \dfrac{\partial
g^\epsilon}{\partial \bar u^\bet}  \dfrac{\partial g^\tau}{\partial \bar
u^\gamma} = 0
\end{equation}
We differentiate \eqref{Ctrans} with respect to $\bar u^\delta,$
and after substituting from \eqref{Ctrans} and skew-symmetrizing
over $\beta$ and $\delta,$ we obtain the integrability conditions
on \eqref{Ctrans}
\begin{equation}\label{Cint}
\left( \dfrac{\partial C^\alpha_{\epsilon\sigma}}{\partial u^\tau} -
\dfrac{\partial C^\alpha_{\tau\sigma}}{\partial u^\epsilon} +
 C^\alpha_{\tau\mu} C^\mu_{\sigma\epsilon}  - C^\alpha_{\epsilon\mu}
C^\mu_{\sigma\tau} \right) \dfrac{\partial g^\epsilon}{\partial \bar u^\bet}
\dfrac{\partial g^\sigma}{\partial \bar u^\gamma} \dfrac{\partial
g^\tau}{\partial \bar u^\delta}  = 0.
\end{equation}
On the other hand a calculation of ${\mathbf H}$ and ${\mathbf K}$
for \eqref{nicef} yields
\begin{equation}\label{hkquad}
\dfrac{\partial^2}{\partial u^\epsilon_x \partial u^\tau_y}
\left(H^\alpha_\sigma - K^\alpha_\sigma\right) = \dfrac{\partial
C^\alpha_{\epsilon\sigma}}{\partial u^\tau} - \dfrac{\partial
C^\alpha_{\tau\sigma}}{\partial u^\epsilon} +
 C^\alpha_{\tau\mu} C^\mu_{\sigma\epsilon}  - C^\alpha_{\epsilon\mu}
C^\mu_{\sigma\tau}.
\end{equation}
As a consequence of \eqref{hkquad}, the integrability conditions
\eqref{Cint} are satisfied whenever ${\mathbf H}={\mathbf K}.$ The
system of partial differential equations \eqref{Ctrans} then
satisfies the Frobenius condition and we deduce that there exists,
at least locally, a non-degenerate collection of functions
$g^\alpha$ satisfying \eqref{Ctrans}.

We may now  assume that
$u^\alpha_{xy}=f^\alpha(x,y,u^\gamma,u^\gamma_x,u^\gamma_y)$ is of the form
\begin{equation}\label{fnoC}
u^\alpha_{xy} + A^\alpha_\gamma(x,y,u^\epsilon) u^\gamma_x  +
B^\alpha_\gamma(x,y,u^\epsilon) u^\gamma_y + G^\alpha(x,y,u^\epsilon)=0.
\end{equation}
For \eqref{fnoC}, we calculate
\begin{equation}\label{hminusk}
H^\alpha_\gamma - K^\alpha_\gamma = \frac{\partial A^\alpha_\gamma}{\partial
u^\bet}  u^\bet_x + \dfrac{\partial B^\alpha_\bet}{\partial u^\gamma} u^\bet_y
+  A^\sigma_\gamma B^\alpha_\sigma - B^\sigma_\gamma A^\alpha_\sigma +
\dfrac{\partial A^\alpha_\gamma}{\partial x}  - \dfrac{\partial
B^\alpha_\gamma}{\partial y}.
\end{equation}
Evidently, $H^\alpha_\gamma = K^\alpha_\gamma$ only if ${\partial
B^\alpha_\gamma}/{\partial u^\bet} = {\partial
A^\alpha_\gamma}/{\partial u^\bet} = 0$ for all $\alpha,\beta,$
and $\gamma.$   It follows that \eqref{fnoC} simplifies to
\begin{equation}\label{almostthere}
u_{xy}^\alpha +  A^\alpha_\gamma(x,y) u^\gamma_x  + B^\alpha_\gamma(x,y)
u^\gamma_y  + G^\alpha(x,y,u^\bet) = 0.
\end{equation}
We now eliminate the functions $A^\alpha_\gamma(x,y)$ and
$B^\alpha_\gamma(x,y)$ from \eqref{almostthere} with a
transformation $u^\alpha \to N^\alpha_\gamma(x,y) u^\gamma,$ where
$N^\alpha_\gamma$ satisfies the system of partial differential
equations
\begin{equation}\label{Ncond}
\dfrac{\partial N^\alpha_\gamma}{\partial y \ }  + A^\alpha_\tau  \,
N^\tau_\gamma = 0, \qquad \dfrac{\partial N^\alpha_\gamma}{\partial x  \ }  +
B^\alpha_\tau \, N^\tau_\gamma = 0.
\end{equation}
The integrability conditions for \eqref{Ncond} are given by
\begin{equation}\label{Nint}
\dfrac{\partial A^\alpha_\gamma}{\partial x \ } + A^\tau_\gamma B_\tau^\alpha =
\dfrac{\partial B^\alpha_\gamma}{\partial y \ } + B^\tau_\gamma A_\tau^\alpha.
\end{equation}
As a consequence of \eqref{hminusk} we see that \eqref{Nint} holds
whenever ${\mathbf H}={\mathbf K}.$   It follows that the system
\eqref{Ncond} satisfies the Frobenius condition and there exists
functions $N^\alpha_\gamma(x,y)$ such that $\det N^\alpha_\gamma
\neq 0$ and \eqref{Ncond} is satisfied. Moreover, we arrive at an
equation of the desired form
\begin{equation}\label{guonly}
u^\alpha_{xy} = g^\alpha(x,y,u^\gamma).
\end{equation}

To prove the final statement of the lemma, we assume that
\eqref{guonly} has the property that
$H^\alpha_\gamma=K^\alpha_\gamma = \lambda \delta^\alpha_\gamma.$
A routine calculation shows that $H^\alpha_\gamma =
K^\alpha_\gamma={\partial g^\alpha}/{\partial u^\gamma}.$  It is
evident that $H^\alpha_\gamma = {\partial g^\alpha}/{\partial
u^\gamma } = \lambda \delta^\alpha_\gamma$ if and only if
${\partial g^\alpha}/{\partial u^\bet} = 0$ whenever $\beta \neq
\alpha.$ This means that each function $g^\alpha$ satisfies
\begin{equation}\label{gdiag}
\dfrac{\partial g^\alpha}{\partial u^\alpha} = \lambda ,  \quad
\alpha=1,2,\dots,m,
\end{equation}
with no summation on $\alpha.$  Differentiating equation
\eqref{gdiag} with respect to $u^\bet$, $ 1 \leq \beta \neq \alpha
\leq m$, results in ${\partial \lambda}/{\partial u^\bet} = 0$ for
all $\beta.$ We deduce that $\lambda=\lambda(x,y)$ and it becomes
apparent from \eqref{gdiag} that $g^\alpha = \lambda(x,y) u^\alpha
+ k^\alpha(x,y).$ After a transformation $u^\alpha \to u^\alpha +
m^\alpha(x,y)$, where $m^\alpha_{xy} = \lambda m^\alpha +
k^\alpha,$  we see that \eqref{guonly} is equivalent to
$u^\alpha_{xy} = \lambda(x,y) u^\alpha.$ \qed

%
%

\section{The Variational Bicomplex For Systems of PDE}
In this section we introduce some basic definitions and results
used in our solution to the variational multiplier problem in
Section 6, including infinite jet bundles and variational
bicomplexes. As we are only interested in applications to our
study of the variational multiplier problem, our discussion will
be of a rather brief nature.  For a  detailed and intrinsic
construction of variational bicomplex, we refer the reader to
\cite{And1992}, \cite{AndKam1997}, \cite{Kamran2002}, and
\cite{Vino1984}.

Let $\pi^k:J^k(E) \to {\mathbf R}^n$ denote the bundle of $k$-jets of local
sections of the trivial bundle $E={\mathbf R}^n \times {\mathbf R}^m.$ Local
coordinates for $J^k(R^n,R^m)$ are given by
\begin{equation*}
(x^i,u^\alpha,u^\alpha_i, u^\alpha_{i_1 i_2}, u^\alpha_{i_1 i_2
i_3}, \dots, u^\alpha_{i_1 i_2 \dots i_k} ),
\end{equation*}
where $1 \leq i_1 \leq i_2 \leq \dots i_k \leq n$ and $1 \leq
\alpha \leq m.$ There are natural projections $\pi^l_k: J^l(E) \to
J^k(E)$ for $ l \geq k.$ The \emph{infinite jet bundle} over E,
$J^\infty(E)$, is defined as the inverse limit of the sequence of
finite jet bundles $\{ J^k(E) \, | \, k=0,1,2, \dots\},$ along
with the projections $\pi^\infty_k:J^\infty(E) \to J^k(E)$ and
$\pi^\infty:J^\infty(E) \to {\mathbf R}^n.$  The \emph{contact
ideal}  ${\mathcal C}(J^\infty(E))$   is generated by the 1-forms
\begin{equation*}
\theta^\alpha_{i_1 i_2 \dots i_k}  = du^\alpha_{i_1 i_2 \dots i_k}
- u^\alpha_{i_1 i_2 \dots i_k j} dx^j, \qquad   \forall \,
k=0,1,2,\dots \ \,  .
\end{equation*}
The full exterior algebra $\Omega^*(J^\infty(E))$ of differential
forms on $J^\infty(E)$ is generated by the 1-forms
\begin{equation*}
dx^i, \, \theta^\alpha, \, \theta^\alpha_{i}, \, \theta^\alpha_{i
j},  \dots \ \, .
\end{equation*}
There is a  bi-grading of the differential forms on $J^\infty(E),$
\begin{equation*}
\Omega^p(J^\infty(E)) = \bigoplus_{r+s=p} \Omega^{r,s}
(J^\infty(E)),
\end{equation*}
where $\Omega^{r,s} ( J^\infty(E))$ is the
$C^\infty(J^\infty(E))$-module generated by differential forms of
the type
\begin{equation*}
 dx^{i_1} \wedge dx^{i_2} \wedge \dots \wedge dx^{i_r}\wedge
\theta^{\alpha_1}_{J_1} \wedge \dots \wedge
\theta^{\alpha_s}_{J_s}.
\end{equation*}

The exterior derivative $d: \Omega^{p}(J^\infty(E)) \rightarrow
\Omega^{p+1}(J^\infty(E))$ splits into the horizontal and vertical
differentials $d=d_H + d_V,$ where
\begin{equation*}
d_H: \Omega^{r,s}(J^\infty(E)) \rightarrow \Omega^{r+1,s}(J^\infty(E)), \qquad
d_V:\Omega^{r,s}(J^\infty(E)) \rightarrow \Omega^{r,s+1}(J^\infty(E)).
\end{equation*}
Since $d^2 = 0,$ it follows that $d_H^2 = 0, \, $ $d_V^2 = 0, \, $
and $ d_H d_V = - d_V d_H.$ The local coordinate expressions for
the horizontal and vertical derivatives of a smooth functions $f
\in C^\infty(J^\infty(E))$ and $1$-forms $dx^i$ and
$\theta^\alpha_I$ are given by
\begin{gather}\label{VBdhdv}
d_H \theta_I^\alpha = dx^i \wedge \theta^\alpha_{Ii}, \quad d_V \theta^\alpha_I
= 0, \quad  d_H (dx^i) = 0, \quad d_V (dx^i) =0,
\\
d_H f = (D_i f) \, dx^i ,  \qquad   d_V f = \dfrac{\partial
f}{\partial u^\alpha} \theta^\alpha + \dfrac{\partial f}{\partial
u^\alpha_i} \theta^\alpha_i  + \dfrac{\partial f}{\partial
u^\alpha_{ij}} \, \theta^\alpha_{ij} +   \cdots \, , \notag
\end{gather}
where $D_i$ denotes total differentiation with respect to $x^i.$
The \emph{free variational bicomplex} is defined to be the double
complex $\{ \Omega^{r,s} J^\infty(E), \, d_H, \, d_V \, \}_{s
 \geq 0; \ r=0,1,\dots, n}.$

In our solution to the variational multiplier problem for the
$f$-Gordon systems
\begin{equation}\label{555gord}
u^\alpha_{xy} = f^\alpha(x,y,u^\gamma,u^\gamma_x,u^\gamma_y),
\qquad  \alpha =1,\dots,m,
\end{equation}
we investigate the existence of certain cohomology classes in the
\emph{constrained variational bicomplex} associated to a system of
partial differential equations.   To construct the constrained
variational bicomplex associated to \eqref{555gord},  we begin
with a trivial bundle $\pi:{\mathbf R}^2 \times {\mathbf R}^{m}
\rightarrow {\mathbf R}^2$  and consider the second-order jet
bundle $J^2({\mathbf R}^2,{\mathbf R}^m)$ with coordinates given
by
\begin{equation*}
(x,y,u^\alpha,u^\alpha_x,u^\alpha_y,u^\alpha_{xx},u^\alpha_{xy},u^\alpha_{yy}),
\quad \alpha=1,\dots,m.
\end{equation*}
An $f$-Gordon system \eqref{555gord} defines a
$(5m+2)$-dimensional submanifold  ${\mathcal R}^2 \overset
{\iota}{\rightarrow} J^2(E)$ called the {\it equation manifold} of
\eqref{555gord}.  We define the {\it first prolongation} of
${\mathcal R}^2$ as the $7m+2$-dimensional submanifold ${\mathcal
R}^3 \overset{\iota}{\rightarrow} J^3(E)$ defined by
\eqref{555gord} and $u^\alpha_{xxy} = D_x f^\alpha$ and
$u^\alpha_{xxy} = D_y f^\alpha.$ Further differentiations of
\eqref{555gord} will yield submanifolds ${\mathcal R}^k
\overset{\iota}{\rightarrow} J^k(E).$ For convenience we define
${\mathcal R}^0 = E$ and ${\mathcal R}^1 = J^1(E).$  We define the
{\it infinite prolonged equation manifold} ${\mathcal R}^\infty$
to be the inverse limit of the sequence $\{ {\mathcal R}^k \, | \,
k=0,1,2,\dots \},$  along with the natural projections
$\pi^\infty_M: {\mathcal R}^\infty \rightarrow M$ and
$\pi^\infty_k: {\mathcal R}^\infty \rightarrow {\mathcal R}^k.$ We
remark that there is a unique map $\iota_{\infty}:{\mathcal
R}^\infty  \rightarrow J^\infty(E)$ that satisfies the commutative
diagram
\begin{equation*}
\begin{CD}
{\mathcal R}^\infty  @>{\iota_\infty}>> J^\infty(E)
\\
@V{\pi^\infty_k}VV    @VV{\pi^\infty_k}V
\\
{\mathcal R}^k @>{\iota}>> J^k(E).
\end{CD}
\end{equation*}
For an $f$-Gordon system \eqref{555gord},  coordinates on
${\mathcal R}^\infty$ are given by
\begin{equation*}
(x,y,u^\alpha,u^\alpha_x,u^\alpha_y,u^\alpha_{xx},u^\alpha_{yy},u^\alpha_{xxx},u^\alpha_{yyy},u^\alpha_{xxxx},u^\alpha_{yyyy},
\dots).
\end{equation*}

We define the contact ideal ${\mathcal C}({\mathcal R}^\infty)$ on
${\mathcal R}^\infty$ via the pullback of the contact ideal on
$J^\infty(E),$ that is ${\mathcal C}({\mathcal R}^\infty) =
\iota^* {\mathcal C} (J^\infty(E)).$ The contact ideal ${\mathcal
C}({\mathcal R}^\infty)$ for an $f$-Gordon system \eqref{555gord}
is generated by the 1-forms
$\{\theta,\theta^\alpha_x,\theta^\alpha_y,\theta^\alpha_{xx},\theta^\alpha_{yy},
\theta^\alpha_{xxx},\theta^\alpha_{yyy}, \dots\},$ where
\begin{gather*}
\theta^\alpha= du^\alpha - u^\alpha_x dx - u^\alpha_y dy, \ \
\theta^\alpha_x = du^\alpha_x - u^\alpha_{xx} dx - f^\alpha \, dy,
 \ \theta^\alpha_y = du^\alpha_y - f^\alpha \, dx - u^\alpha_{yy}
dy,
\\
\theta^\alpha_{xx} = du^\alpha_{xx} - u^\alpha_{xxx} dx - \iota^*(D_x f^\alpha)
dy, \quad \theta^\alpha_{yy} = du^\alpha_{yy} - \iota^*(D_y f^\alpha) dx -
u^\alpha_{yyy} dy,
 \dots \ .
\end{gather*}
A basis for the $C^\infty(\eqman)$-module of 1-forms
$\Omega^1(\eqman)$ is given by
\begin{equation}\label{eqmancoframe}
\{ dx, dy, \theta^\alpha, \theta^\alpha_x,
\theta^\alpha_y,\theta^\alpha_{xx},\theta_{yy},
\theta^\alpha_{xxx},\theta^\alpha_{yyy}, \dots \}
\end{equation}

For every $p,$ we have a bi-grading $\Omega^p({\mathcal
R}^\infty)= \bigoplus \Omega^{r,s}({\mathcal R}^\infty),$ where
$r+s=p$ and $\Omega^{r,s}({\mathcal R}^\infty) = \iota_{\infty}^*
\Omega^{r,s} (J^\infty(E)).$  The exterior derivative
$d:\Omega^p({\mathcal R}^\infty) \to \Omega^{p+1} ({\mathcal
R}^\infty)$ splits as $d = d_H + d_V,$ where $d_H:\Omega^{r,s} \to
\Omega^{r+1,s}$ and $d_V: \Omega^{r,s} \to \Omega^{r,s+1}$ are the
{\it horizontal} and {\it vertical} differentials, respectively.
In the following section we will frequently use the $d_H$ and
$d_V$ structure equations for the coframe \eqref{eqmancoframe},
which are given by $d_H \, dx = d_H \, dy = d_V \, dx = d_V \,
dy=0,$
\begin{gather}\label{destructo}
d_H \theta^\alpha = dx \wedge \theta^\alpha_x + dy \wedge \theta^\alpha_y,
\qquad d_H \theta^\alpha_x = dx \wedge \theta^\alpha_{xx} + dy \wedge d_V
f^\alpha,
\\[7pt]
d_H \theta^\alpha_y = dx \wedge d_V f^\alpha + dy \wedge \theta^\alpha_{yy},
\qquad  d_V \theta^\alpha=d_V \theta^\alpha_{x^i}=d_V \theta^\alpha_{y^j}=0,
\notag
\\
d_H \, g =  D_x g \, dx + D_y g \, dy, \qquad d_V \, g = \dfrac{\partial
g}{\partial u^\alpha} \theta^\alpha + \dfrac{\partial g}{\partial u^\alpha_x}
\theta^\alpha_x + \dfrac{\partial g}{\partial u^\alpha_y} \theta^\alpha_y +
\cdots, \notag
\end{gather}
where $g \in C^\infty(\eqman)$ and $D_x$ and $D_y$ denote total differentiation
constrained to the equation manifold ${\mathcal R}^\infty.$

\begin{defin}  The {\it constrained variational bicomplex} $\Omega^{*,*}
({\mathcal R}^\infty,d_H,d_V)$ associated to an $f$-Gordon system
is the pullback of the free variational bicomplex $\Omega^{*,*}
(J^\infty(E),d_H,d_V)$ by $\iota_\infty:{\mathcal R}^\infty \to
J^\infty(E).$

\begin{equation*}
\begin{CD}
@.   @.     @AA{d_V}A    @AA{d_V}A     @AA{d_V}A
\\
\phantom{a} @.  0 @>>>  \Omega^{0,2}({\mathcal R}^\infty)   @>{d_H}>>
\Omega^{1,2} ({\mathcal R}^\infty)  @>{d_H}>> \Omega^{2,2} ({\mathcal
R}^\infty)
\\
@.  @.    @AA{d_V}A    @AA{d_V}A     @AA{d_V}A
\\
\phantom{a} @.  0 @>>>  \Omega^{0,1} ({\mathcal R}^\infty)
@>{d_H}>> \Omega^{1,1} ({\mathcal R}^\infty) @>{d_H}>>
\Omega^{2,1} ({\mathcal R}^\infty)
\\
@.  @.  @AA{d_V}A   @AA{d_V}A    @AA{d_V}A
\\
0 @>>> \mathbf R @>>> \Omega^{0,0} ({\mathcal R}^\infty) @>{d_H}>>
\Omega^{1,0}({\mathcal R}^\infty) @>{d_H}>> \Omega^{2,0}
({\mathcal R}^\infty)
\end{CD}
\end{equation*}
\end{defin}

The columns of the variational bicomplex on ${\mathcal R}^\infty$
are locally exact, while the rows will not be exact in general. We
define the horizontal cohomology classes of the variational
bicomplex by
\begin{equation*}
H^{r,s} ({\mathcal R}^\infty) = \dfrac{ \{ \omega \in
\Omega^{r,s}({\mathcal R}^\infty) \, |  \, d_H  \omega  = 0 \} } {
\{ d_H \eta \, | \, \eta \in \Omega^{r-1,s}({\mathcal R}^\infty)
\} }, \quad  \ r =1,2, \ \, s \geq 0.
\end{equation*}
It is easy to see that $H^{r,s} ({\mathcal R}^\infty)$ is a vector
space over $\mathbf R$.   The cohomology classes in
$H^{r,s}(\eqman)$ have interesting interpretations.  For example,
each class $[\omega]=H^{1,0}({\mathcal R}^\infty)$ represents a
{\it classical conservation law}.   Indeed, if $\omega = M \, dx +
N \, dy,$  then $d_H \omega = 0$ if and only if $D_x M = D_y N$
when restricted to the equation manifold ${\mathcal R}^\infty$. We
refer to a cohomology class $[\omega ] \in H^{1,s}({\mathcal
R}^\infty)$ as a {\it type $(1,s)$ conservation law} or {\it
form-valued conservation law}.  In the following section, we will
show that the solution to the variational multiplier problem is
closely related to the existence of non-trivial classes $[\omega]
\in H^{1,2}({\mathcal R}^\infty).$   Since systems of the form
\eqref{555gord} are of Cauchy-Kovaleskaya type, it follows from a
general result of Vinogradov \cite{Vino1984} that the  horizontal
cohomology spaces $H^{0,s}({\mathcal R}^\infty)$ satisfy
\begin{equation}\label{zerocomo}
H^{0,0} (\eqman) = {\mathbf R}, \quad \ \ H^{0,s} ({\mathcal
R}^\infty)=0, \ \ \ s>0.
\end{equation}
We remark that \eqref{zerocomo} was also established in
\cite{mydiss} by constructing a coframe adapted to systems
\eqref{555gord}.

%
%

\section{Derivation of Necessary and Sufficient Conditions for the Existence of a Variational Multiplier}
Our first result states that the problem of determining all
Lagrangians and variational multipliers for an $f$-Gordon system
is equivalent to determining all $d$ closed forms $\omega \in
\Omega^{1,2}(\eqman)$ of a certain type.   We also give a
description of the general form of possible Lagrangians for an
$f$-Gordon system.   Finally, we show that a variational
multiplier has no one-jet dependence. Proposition \eqref{TomLager}
along with Corollaries \eqref{lagerdiver} and \eqref{CorrNormF}
will suffice to establish Proposition \eqref{LnormalF}, which was
stated without proof in Section 2.

\begin{theorem}\label{TomLager}  For a system of differential equations  $u^\alpha_{xy} = f^\alpha(x,y,u^\gamma,
u^\gamma_x, u^\gamma_y)$  the following statements are equivalent.

\medskip

\noindent{(i) There exists a type $(1,2)$ form
\begin{equation}\label{FormOfOmega}
\omega =( T_{\alpha\bet} \, dx + S_{\alpha\bet}  \, dy ) \wedge
\theta^\alpha \wedge \theta^\bet \,  + \, R_{\alpha\bet}  ( \,
\theta^\alpha \wedge \theta_x^\bet \wedge dx \, -  \, \theta^\bet
\wedge \theta_y^\alpha \wedge dy \, )
\end{equation}
such that $\, d \omega = 0$ on ${\mathcal R}^\infty.$}

\medskip

\noindent(ii)  There exists a first-order multiplier $M_{\alpha\bet}
(x,y,u^\gamma, u^\gamma_x, u^\gamma_y)$ and a first-order Lagrangian
$L(x,y,u^\gamma,u^\gamma_x,u^\gamma_y)$ such that  $E_\alpha(L) =
M_{\alpha\bet} (u^\bet_{xy} - f^\bet).$

\medskip

{\noindent(iii) There exists a multiplier $M_{\alpha\bet} =
M_{\alpha\bet}(x,y,u^\gamma)$ and a Lagrangian
\begin{equation*}
L = -R_{\alpha\bet}(x,y,u^\gamma) u^\alpha_x  u^\bet_y  +
Q_\alpha(x,y,u^\gamma) u^\alpha_x + P_\alpha(x,y,u^\gamma)
u^\alpha_y + N(x,y,u^\gamma)
\end{equation*}
such that $E_{\alpha} (L) = M_{\alpha\bet} \left( u^\bet_{xy} -
f^\bet \right).$}
\end{theorem}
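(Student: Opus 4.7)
The plan is to establish the cyclic implications (iii)$\Rightarrow$(ii)$\Rightarrow$(iii) by matching jet coefficients in the Euler--Lagrange equation, together with (iii)$\Leftrightarrow$(i) via the variational bicomplex machinery of Section 5. The implication (iii)$\Rightarrow$(ii) is a specialization with nothing to prove. The real substance is in extracting a normal-form Lagrangian from any first-order solution of $E_\alpha(L)=M_{\alpha\beta}(u^\beta_{xy}-f^\beta)$, and in translating such a normal form into a $d$-closed element of $\Omega^{1,2}(\eqman)$ of the prescribed shape \eqref{FormOfOmega}.

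For (iii)$\Rightarrow$(i), I would set $\lambda = L\, dx \wedge dy \in \Omega^{2,0}(\eqman)$ and integrate by parts in the horizontal direction using the structure equations \eqref{destructo}. Using $\theta^\alpha_x \wedge dx \wedge dy = -d_H(\theta^\alpha \wedge dy)$ and $\theta^\alpha_y \wedge dx \wedge dy = d_H(\theta^\alpha \wedge dx)$, one obtains
\begin{equation*}
d_V \lambda = E_\alpha(L)\, \theta^\alpha \wedge dx \wedge dy + d_H \sigma_L, \qquad \sigma_L = \frac{\partial L}{\partial u^\alpha_y}\, \theta^\alpha \wedge dx - \frac{\partial L}{\partial u^\alpha_x}\, \theta^\alpha \wedge dy.
\end{equation*}
On $\eqman$ the source form vanishes by the multiplier property. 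I would then set $\omega = d_V \sigma_L \in \Omega^{1,2}(\eqman)$: one has $d_V \omega = 0$ by $d_V^2 = 0$, and $d_H \omega = -d_V d_H \sigma_L = -d_V^2 \lambda = 0$, so $d\omega = 0$. Because the form of $L$ in (iii) yields $\partial L/\partial u^\alpha_y = -R_{\beta\alpha}\, u^\beta_x + P_\alpha$ and $\partial L/\partial u^\alpha_x = -R_{\alpha\beta}\, u^\beta_y + Q_\alpha$, the expansion of $d_V \sigma_L$ produces no $\theta^\gamma_x \wedge dy$ or $\theta^\gamma_y \wedge dx$ terms, and $\omega$ acquires precisely the shape \eqref{FormOfOmega} with the same $R_{\alpha\beta}$ and with $T_{\alpha\beta}$, $S_{\alpha\beta}$ built from first derivatives of $R$, $Q$, $P$.

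For (ii)$\Rightarrow$(iii), I would read $E_\alpha(L) = M_{\alpha\beta}(u^\beta_{xy} - f^\beta)$ as a polynomial identity on $J^2$. The coefficients of $u^\gamma_{xx}$ and $u^\gamma_{yy}$ on the right are zero, forcing $\partial^2 L/\partial u^\alpha_x \partial u^\gamma_x = 0$ and $\partial^2 L/\partial u^\alpha_y \partial u^\gamma_y = 0$, so
\begin{equation*}
L = D_{\alpha\beta}(x,y,u^\gamma)\, u^\alpha_x u^\beta_y + B_\alpha\, u^\alpha_x + C_\alpha\, u^\alpha_y + A.
\end{equation*}
The $u^\beta_{xy}$-matching gives $M_{\alpha\beta} = -(D_{\alpha\beta} + D_{\beta\alpha})$, which is automatically symmetric and depends only on $(x,y,u^\gamma)$. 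Symmetrizing in the bilinear $u^\beta_x u^\gamma_y$ matching imposes a cyclic closure condition on $D_{[\alpha\beta]}$; a Poincar\'e lemma then produces $F_\alpha(x,y,u^\gamma)$ with $D_{[\alpha\beta]} = \partial_\alpha F_\beta - \partial_\beta F_\alpha$, and a direct computation shows $D_{[\alpha\beta]}\, u^\alpha_x u^\beta_y$ equals $D_x(F_\alpha u^\alpha_y) - D_y(F_\alpha u^\alpha_x)$ modulo linear-in-$(u_x,u_y)$ corrections that may be absorbed into $B_\alpha$ and $C_\alpha$. This places $L$, modulo a divergence, in the form (iii) with $R_{\alpha\beta} = -D_{(\alpha\beta)} = M_{\alpha\beta}/2$.

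The hardest step, closing the cycle with (i)$\Rightarrow$(iii), is the main obstacle. Here I would expand $d\omega = 0$ on the coframe \eqref{eqmancoframe} and equate each contact monomial's coefficient to zero. The monomials involving $\theta^\gamma_{xx}$ and $\theta^\gamma_{yy}$ force $T_{\alpha\beta}$, $S_{\alpha\beta}$, $R_{\alpha\beta}$ to have no second-order dependence; further vanishing conditions remove the first-order $(u_x,u_y)$ dependence and impose $R_{\alpha\beta} = R_{\beta\alpha}$. The remaining components encode the integrability conditions under which functions $Q_\alpha$, $P_\alpha$, $N$ of $(x,y,u^\gamma)$ exist making $L = -R_{\alpha\beta}\, u^\alpha_x u^\beta_y + Q_\alpha\, u^\alpha_x + P_\alpha\, u^\alpha_y + N$ satisfy $E_\alpha(L) = 2R_{\alpha\beta}(u^\beta_{xy} - f^\beta)$; their solvability is guaranteed by the vanishing of horizontal cohomology \eqref{zerocomo}. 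The bookkeeping of the many contact monomials in $d\omega$ is the tedious but systematic core of the argument.
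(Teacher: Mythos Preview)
Your implications (iii)$\Rightarrow$(i) and (ii)$\Rightarrow$(iii) are essentially the paper's, though in the latter you do unnecessary extra work: statement (iii) does not ask that $R_{\alpha\beta}$ be symmetric, so once you have observed $\partial^2 L/\partial u^\alpha_x\partial u^\gamma_x=0$ and $\partial^2 L/\partial u^\alpha_y\partial u^\gamma_y=0$ you are done. There is no need for a cyclic closure condition, a Poincar\'e lemma, or absorbing $D_{[\alpha\beta]}$ into a divergence; simply set $R_{\alpha\beta}=-D_{\alpha\beta}$ and note that the $u^\beta_{xy}$ coefficient in $E_\alpha(L)$ is automatically $R_{\alpha\beta}+R_{\beta\alpha}$, which depends only on $(x,y,u^\gamma)$.

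The genuine gap is in (i)$\Rightarrow$(iii). First, your assertion that $d\omega=0$ forces $R_{\alpha\beta}=R_{\beta\alpha}$ is false: the antisymmetric part of $R$ survives and is precisely what the paper later strips off by passing to $\omega'=\omega-\tfrac14\,d_H\bigl((R_{\alpha\beta}-R_{\beta\alpha})\theta^\alpha\wedge\theta^\beta\bigr)$. What $d_V\omega=0$ actually gives you is only that $R_{\alpha\beta}=R_{\alpha\beta}(x,y,u^\epsilon)$, together with expressions for $T_{\alpha\beta}$ and $S_{\alpha\beta}$ in terms of $R$ and lower-order data. Second, your plan to read off the integrability conditions for $Q_\alpha,P_\alpha,N$ from the remaining components of $d\omega=0$ and then appeal to \eqref{zerocomo} is too vague to count as a proof; you have not identified which exactness statement is being invoked, at what bidegree, or on which form. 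The paper instead argues constructively: it writes down an explicit $\rho_0\in\Omega^{1,1}$ so that $\omega-d_V\rho_0$ has coefficients depending only on $(x,y,u^\epsilon)$, then finds $\rho_1$ with $d_V\rho_1=\omega-d_V\rho_0$, sets $\rho=\rho_0+\rho_1$, and uses $d_V(d_H\rho)=-d_H\omega=0$ together with vertical exactness to produce $\lambda=L\,dx\wedge dy$ with $d_V\lambda=d_H\rho$. The explicit shape of $\rho$ then forces $L$ into the form in (iii), and $\iota^*E(\lambda)=0$ yields the multiplier identity $E_\alpha(L)=2R_{(\alpha\beta)}(u^\beta_{xy}-f^\beta)$. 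This constructive descent through the bicomplex is the missing idea in your outline.
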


\begin{proof} We first show that (i) implies (iii).
Suppose that $\omega$ is given by \eqref{FormOfOmega} and that
$d\omega=0.$ on $\eqman.$  It follows immediately  that $d_H
\omega = d_V \omega =0. $ A routine calculation using
\eqref{destructo} shows that if $d_V \omega =0,$ then
$R_{\alpha\bet}=R_{\alpha\bet} (x,y,u^\epsilon)$ and $\omega$ is
of the form
\begin{align*} \omega &=\left[  \frac{1}{2}\left( \dfrac{\partial
R_{\alpha\gamma}}{\partial u^\bet} - \dfrac{\partial
R_{\bet\gamma}}{\partial u^\alpha} \right)  u^\gamma_x  +
T^0_{\alpha\bet} (x,y,u^\epsilon)\right] \theta^\alpha \wedge
\theta^\bet \wedge dx + R_{\alpha\bet}  \theta^\alpha \wedge
\theta^\bet_x \wedge dx
\\
& \  + \left[\frac{1}{2}\left( \dfrac{\partial
R_{\gamma\bet}}{\partial u^\alpha} - \dfrac{\partial
R_{\gamma\alpha}}{\partial u^\bet} \right) u^\gamma_y  +
S^0_{\alpha\bet} (x,y,u^\epsilon)\right] \theta^\alpha \wedge
\theta^\bet \wedge dy - R_{\bet\alpha}  \theta^\alpha \wedge
\theta^\bet_y \wedge dy. \notag
\end{align*}
If we define $\rho_0 \in \Omega^{1,1} (\eqman)$ by
\begin{equation*}
\rho_0 = -R_{\alpha\bet} u^\bet_x \ \theta^\alpha \wedge dx +
R_{\bet\alpha}  u^\bet_y \ \theta^\alpha \wedge dy,
\end{equation*}
then using\eqref{destructo} we see that
\begin{equation*}
\omega - d_V \rho_0= T^0_{\alpha\bet} \, \theta^\alpha \wedge
\theta^\bet \wedge dx + S^0_{\alpha\bet} \, \theta^\alpha \wedge
\theta^\bet \wedge dx.
\end{equation*}
Since $d_V (\omega - d_V \rho_0)=0,$ there exists a form $\rho_1
\in \Omega^{1,1} (\eqman) $ such that $d_V \rho_1= \omega - d_V
\rho_0.$  Since the functions $S^0_{\alpha\beta}$ and
$T^0_{\alpha\beta}$ have no 1-jet dependence, we may choose
$\rho_1$ to be of the form
\begin{equation*}
\rho_1 = -P_{\alpha}(x,y,u^\epsilon)  \, \theta^\alpha \wedge dx +
Q_{\alpha}(x,y,u^\epsilon) \, \theta^\alpha \wedge dy.
\end{equation*}
We now define $\rho = \rho_0 + \rho_1$ so that $d_V \rho =
\omega.$   Since $d_V d_H = -d_V d_H,$  we have $d_V (d_H \rho) =
-d_H d_V \rho = -d_H \omega = 0$ on $\eqman.$  Therefore there is
a Lagrangian form $\lambda \in \Omega^{0,2} (\eqman )$ with $d_V
\lambda = d_H \rho$ on $\eqman.$  We will now show that $\lambda=
L \, dx \wedge dy,$ where $L$ is of the form given in statement
(iii).    Computing $d_H \rho$ on $J^\infty(E)$ yields
\begin{align*}
d_H \rho  =& \  -\left[\left( R_{\alpha\bet} + R_{\bet\alpha} \right)
u^\bet_{xy} + g_\alpha(x,y,u^\epsilon,u^\epsilon_x,u^\epsilon_y ) \right]
\theta^\alpha \wedge dx\wedge dy -
\\
&\left( R_{\alpha\bet} u^\bet_x + P_\alpha \right) \theta^\alpha_y \wedge dx
\wedge dy- \left(R_{\bet\alpha} u^\bet_y + Q_\alpha \right)\theta^\alpha_x
\wedge dx \wedge dy,
\end{align*}
where
\begin{gather}\label{ggalpha}
g_\alpha = \left(\frac{\partial R_{\alpha\bet} }{\partial u^\gamma} +
\frac{\partial R_{\gamma\alpha} }{\partial u^\bet} \right) u^\bet_x u^\gamma_y
+ \left( \frac{\partial R_{\alpha\bet}}{\partial y} + \frac{\partial
Q_\alpha}{\partial u^\bet} \right)  u^\bet_x
\\
+ \left( \frac{\partial R_{\bet\alpha}}{\partial x} + \frac{\partial
P_\alpha}{\partial u^\bet} \right) u^\bet_y + \frac{\partial P_\alpha}{\partial
y} + \frac{\partial Q_\alpha}{\partial x}. \notag
\end{gather}
When restricted to $\eqman,$
\begin{equation*}
-d_H \rho =  \left[ \left(  2 R_{(\alpha\bet)} f^\bet + g_\alpha \right)
\theta^\alpha + \left(R_{\alpha\bet} u^\bet_x + P_\alpha \right)
\theta^\alpha_y   + \left(R_{\bet\alpha} u^\bet_y + Q_\alpha
\right)\theta^\alpha_x   \right] \wedge dx\wedge dy,
\end{equation*}
where $R_{(\alpha\bet)} = (R_{\alpha\bet} + R_{\bet\alpha})/2$ and
$g_\alpha$ is given by \eqref{ggalpha}.  Since $\lambda = L \, dx
\wedge dy$ and $d_V\lambda=d_H \rho$ on ${\mathcal R}^\infty,$ we
see that $L$ must satisfy
\begin{equation*}
\dfrac{\partial L}{\partial u^\alpha} = -2 R_{(\alpha\bet)} f^\bet
- g_\alpha, \quad \dfrac{\partial L}{\partial
u^\alpha_x}=-R_{\bet\alpha} u^\bet_y - Q_\alpha, \quad
\dfrac{\partial L}{\partial u^\alpha_y}=-R_{\alpha\bet} u^\bet_x -
P_\alpha.
\end{equation*}
It follows that there exists a function $N(x,y,u^\epsilon)$ such
that
\begin{equation}\label{Lnormal}
L=-\left( R_{\alpha\bet} u^\bet_x u^\alpha_y + Q_\alpha u^\alpha_x
+ P_\alpha u^\alpha_y + N(x,y,u^\epsilon) \right).
\end{equation}
If we apply the Euler-Lagrange operator $E(\lambda)=E_\alpha(L) \,
\theta^\alpha \wedge dx \wedge dy$ on $J^\infty(E),$  then
\begin{equation*}
d_H \rho + E(\lambda) = d_V (\lambda).
\end{equation*}
Since $d_H \rho = d_V \lambda$ when restricted to the equation
manifold $\eqman,$ we deduce that implies $\iota^{*} E(\lambda) =
0.$ On the other hand, a direct computation of the Euler-Lagrange
equations for \eqref{Lnormal} gives us
\begin{equation*}
E_\alpha(L) = 2 R_{(\alpha\bet)} u^\bet_{xy} + g_\alpha +  \dfrac{\partial
L}{\partial u^\alpha} .
\end{equation*}
Since $\iota^{*} E_\alpha(L) =0$ for all $\alpha,$ we get $
g_\alpha + {\partial L}/{\partial u^\alpha}  = -2R_{(\alpha\bet)}
f^\bet,$ which implies
\begin{equation*}
E_\alpha(L) = 2R_{(\alpha\bet)} ( u^\bet_{xy} - f^\bet ),
\end{equation*}
and (iii) is proved.   Moreover, the multiplier $M_{\alpha\bet}$
is defined by $M_{\alpha\bet}= R_{\alpha\bet} + R_{\bet\alpha}.$

We now prove that statement (iii) implies (i).  Assume that there
is  a variational multiplier $M_{\alpha\bet} (x,y,u^\gamma),$ a
first-order Lagrangian $\lambda = L \, dx \wedge dy,$ where $L$ is
of the form \eqref{Lnormal}, and
\begin{equation}\label{multisR}
E_\alpha(L) = M_{\alpha\bet} (u^\bet_{xy} - f^\bet).
\end{equation}
Note that  \eqref{multisR} explicitly determines that the
multiplier is $M_{\alpha\bet} = R_{\alpha\bet} + R_{\bet\alpha}.$
By the first variational formula (See \cite{And1992}, Corollary
5.3), we have $ E(\lambda) + d_H \eta = d_V \lambda,$ where
\begin{equation*}
\eta = \dfrac{\partial L}{\partial u^\alpha_y} \, \theta^\alpha
\wedge dx - \dfrac{\partial L}{\partial u^\alpha_x} \,
\theta^\alpha \wedge dy.
\end{equation*}
Since $\iota^{*} E(\lambda)=0,$  it follows that $\iota^{*} d_H
\eta = d_V \lambda.$  Then define $\omega = d_V \eta$ and the
resulting calculation of $d \omega$ on the equation manifold is
\begin{equation*} d \omega = d_H d_V \omega + d_V \omega =  d_H (d_V
\eta) + d_V d_V \eta = -d_V d_V \lambda= 0.
\end{equation*}
Moreover, using \eqref{destructo} to compute $d_V \eta$ will
verify that $\omega$ is of the form \eqref{FormOfOmega}.

We have  proven that (i) is equivalent to (iii), and clearly (iii)
implies (ii).    Assume that (ii) holds and there is a first-order
Lagrangian $L(x,y,u^\gamma, u^\gamma_x,u^\gamma_y)$ and a
variational multiplier $M_{\alpha\bet}(x,y,u^\gamma, u^\gamma_x,
u^\gamma_y)$ for the system $u^\bet_{xy} = f^\bet.$ We will show
that this implies (iii).   Calculating the Euler-Lagrange
equations for an arbitrary first-order Lagrangian, we find that
\begin{equation}\label{ladypilot}
E_\alpha(L)= - \dfrac{ \partial^2 L}{\partial u^\alpha_x \partial
u^\bet_x}  u^\bet_{xx} - \left( \dfrac{ \partial^2 L}{\partial
u^\alpha_x \partial u^\bet_y} + \dfrac{ \partial^2 L}{\partial
u^\alpha_y \partial u^\bet_x} \right) u^\bet_{xy}  -  \dfrac{
\partial^2 L}{\partial u^\alpha_y \partial u^\bet_y} u^\bet_{yy}
 +  G_\alpha,
\end{equation}
where $G_\alpha$ is a first-order function.   On the other hand, we are
assuming
\begin{equation}\label{deepred}
E_\alpha(L) = M_{\alpha\bet} \left( u^\bet_{xy}  -
f^\bet(x,y,u^\gamma,u^\gamma_x,u^\gamma_y) \right),
\end{equation}
where $M_{\alpha\bet}$ is a non-degenerate first-order multiplier.
Comparing \eqref{ladypilot} and \eqref{deepred}, we deduce that
\begin{equation}\label{nik1010}
\dfrac{ \partial^2 L}{\partial u^\alpha_x \partial u^\bet_x} = 0,
\qquad \dfrac{ \partial^2 L}{\partial u^\alpha_y
\partial u^\bet_y}=0.
\end{equation}
It follows that $L$ is of the form given in statement (iii) and
the fact that $M_{\alpha\bet}$ has no one-jet dependence follows
immediately from a calculation of the Euler-Lagrange equations for
a Lagrangian  satisfying \eqref{nik1010}.
\end{proof}

The following corollary states  that a Lagrangian $\lambda$
corresponding to a type (2,1) cohomology class $[\omega]$ is
unique up to modification by a total divergence.
\begin{corr}\label{lagerdiver}
Let $\omega$ be given by \eqref{FormOfOmega}  with  $d \omega= 0.$
If
\begin{equation*}
\lambda=L(x,y,u^\gamma,u^\gamma_x,u^\gamma_y) \, dx \wedge dy,
\qquad \lambda'=L'(x, y,u^\gamma,u^\gamma_x,u^\gamma_y) \, dx
\wedge dy
\end{equation*}
are two Lagrangians and $\eta, \,  \eta' \in \Omega^{1,1}(\eqman)$
satisfy
\begin{equation*}
d_H \eta=d_V \lambda, \qquad d_H \eta'=d_V \lambda', \qquad d_V \eta=d_V \eta'
= \omega,
\end{equation*}
then  $\lambda'=\lambda + d_H \beta$ for some form $\beta \in
\Omega^{1,0}(\eqman).$
\end{corr}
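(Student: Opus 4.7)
The plan is to carry out a diagram chase in the constrained variational bicomplex, using the local $d_V$-exactness of the columns together with the anti-commutation relation $d_H d_V = -d_V d_H$. The equality $d_V \eta = d_V \eta' = \omega$ should propagate, via one application of $d_V$-exactness and one application of $d_H$, to the statement that $\lambda' - \lambda$ is $d_H$-exact.

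First, I would form $\eta - \eta' \in \Omega^{1,1}(\eqman)$. By hypothesis it is $d_V$-closed, so by the local $d_V$-exactness of the columns of the constrained variational bicomplex (recorded in Section 5) there exists, after restricting to a suitable neighborhood, a form $\sigma \in \Omega^{1,0}(\eqman)$ with $\eta - \eta' = d_V \sigma$. Applying $d_H$ and using $d_H \eta = d_V \lambda$, $d_H \eta' = d_V \lambda'$, and $d_H d_V = -d_V d_H$, I obtain $d_V(\lambda - \lambda' + d_H \sigma) = 0$. Thus the $(2,0)$-form $\mu := \lambda - \lambda' + d_H \sigma$ is $d_V$-closed.

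Next, any element of $\Omega^{2,0}(\eqman)$ with vanishing $d_V$ has a coefficient independent of all jet-fiber coordinates, so $\mu = h(x,y)\, dx \wedge dy$ for some smooth $h$. A one-variable Poincar\'e lemma on the base then realizes $\mu$ as $d_H \tau$ locally, for instance with $\tau = \bigl(\int_{x_0}^{x} h(s,y)\, ds\bigr) dy$. Setting $\beta := \sigma - \tau \in \Omega^{1,0}(\eqman)$ then gives $\lambda' - \lambda = d_H \sigma - \mu = d_H(\sigma - \tau) = d_H \beta$, which is the desired conclusion.

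The only real obstacle I anticipate is bookkeeping: tracking the sign from $d_H d_V = -d_V d_H$ correctly, and confirming that the local $d_V$-exactness stated for the free variational bicomplex pulls back to the constrained bicomplex on $\eqman$ (so that the potentials $\sigma$ and $\tau$ genuinely live in $\Omega^{1,0}(\eqman)$ rather than on $J^\infty(E)$). Once these points are in place, the argument is an entirely formal diagram chase.
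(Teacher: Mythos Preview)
Your proposal is correct and follows essentially the same approach as the paper's proof: both use $d_V$-exactness of the columns to write $\eta-\eta'$ (the paper uses $\eta'-\eta$) as $d_V$ of a $(1,0)$-form, then apply $d_H$ with the anticommutation relation to reduce to a $d_V$-closed $(2,0)$-form, identify it as $a(x,y)\,dx\wedge dy$, and absorb it by adjusting the potential with a horizontal antiderivative. Your concern about $d_V$-exactness on $\eqman$ is addressed directly in Section~5, where the paper states that the columns of the constrained variational bicomplex are locally exact.
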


\begin{proof}
Since $d_V (\eta' - \eta) =0,$ there exists a form $\beta_0 \in
\Omega^{1,0}(\eqman)$ such that $\eta'=\eta + d_V \beta_0.$   It
follows that
\begin{equation*}
d_V \lambda'=d_H \eta'= d_H (\eta + d_V \beta_0) =  d_V \lambda +
d_H d_V \bet_0= d_V(\lambda - d_H \beta_0).
\end{equation*}
Consequently, $\lambda$ and $\lambda'$ satisfy
\begin{equation*}
d_V(\lambda' - \lambda + d_H \beta_0) = 0,
\end{equation*}
or equivalently that
\begin{equation*}
\lambda' - \lambda + d_H \beta_0 = a(x,y) \, dx \wedge dy.
\end{equation*}
Defining $\beta=\beta_0 - A(x,y) \, dy,$ where $A_x(x,y) =
a(x,y),$ we have that
\begin{equation*}
\eta' = \eta + d_V \beta \quad \text{and} \quad \lambda'=\lambda -
d_H \beta,
\end{equation*}
as required. \end{proof}

The following corollary gives a description the general form of a
$f$-Gordon systems that is variational.  Corollary
\eqref{CorrNormF}, along with Theorem \eqref{TomLager} and
Corollary \eqref{lagerdiver}, establishes Proposition
\eqref{LnormalF}.

\begin{corr}\label{CorrNormF} { If $u_{xy}^\bet = f^\bet(x,y,u^\gamma,
u^\gamma_x, u^\gamma_y)$ has a  nonsingular variational multiplier
$M_{\alpha\bet},$  then
\begin{equation*}
f^\bet = C^\bet_{\epsilon\tau} u^\epsilon_x u^\tau _y +
A^\bet_\epsilon u^\epsilon_x + B^\bet_\epsilon u^\epsilon_y +
G^\bet,
\end{equation*}
where $A^\bet_\epsilon, B^\bet_\epsilon, C^\bet_{\epsilon\tau},
G^\bet \in C^\infty(J^0(E)).$}
\end{corr}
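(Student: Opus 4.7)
The plan is to invoke Theorem~\ref{TomLager}, part (iii), to put the Lagrangian in normal form, and then extract the form of $f^\beta$ by direct inspection of the Euler--Lagrange expression. Concretely, because by hypothesis there exists a first-order Lagrangian $L$ and a nonsingular first-order multiplier $M_{\alpha\bet}$ satisfying $E_\alpha(L)=M_{\alpha\bet}(u^\bet_{xy}-f^\bet)$, statement (ii) of Theorem~\ref{TomLager} holds; therefore its equivalent statement (iii) yields functions $R_{\alpha\bet},Q_\alpha,P_\alpha,N\in C^\infty(J^0(E))$ such that
\begin{equation*}
L=-R_{\alpha\bet}(x,y,u^\gamma)\,u^\alpha_x u^\bet_y+Q_\alpha(x,y,u^\gamma)\,u^\alpha_x+P_\alpha(x,y,u^\gamma)\,u^\alpha_y+N(x,y,u^\gamma),
\end{equation*}
with $M_{\alpha\bet}=R_{\alpha\bet}+R_{\bet\alpha}$.

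Next, I would compute $E_\alpha(L)=\partial L/\partial u^\alpha-D_x(\partial L/\partial u^\alpha_x)-D_y(\partial L/\partial u^\alpha_y)$ directly from this normal form. Because $\partial L/\partial u^\alpha_x=-R_{\alpha\bet}u^\bet_y+Q_\alpha$ has no dependence on $u^\gamma_x$, applying $D_x$ introduces only $u^\tau_x u^\bet_y$ cross terms together with terms linear in $u^\bet_y$ and a pure zeroth-order piece; similarly, $D_y(\partial L/\partial u^\alpha_y)$ produces only $u^\tau_y u^\alpha_x$ cross terms, terms linear in $u^\alpha_x$, and a zeroth-order piece. The only second-order jet contribution is the coefficient of $u^\bet_{xy}$, which collects to exactly $R_{\alpha\bet}+R_{\bet\alpha}=M_{\alpha\bet}$. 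Thus the computation yields
\begin{equation*}
E_\alpha(L)=M_{\alpha\bet}\,u^\bet_{xy}+\Psi_\alpha(x,y,u^\gamma,u^\gamma_x,u^\gamma_y),
\end{equation*}
where $\Psi_\alpha$ decomposes as
$\Psi_\alpha= \widetilde C_{\alpha,\epsilon\tau}\,u^\epsilon_x u^\tau_y+\widetilde A_{\alpha\epsilon}\,u^\epsilon_x+\widetilde B_{\alpha\epsilon}\,u^\epsilon_y+\widetilde G_\alpha$
with all tilded coefficients lying in $C^\infty(J^0(E))$ and constructed explicitly from $R$, $Q$, $P$, $N$ and their $(x,y,u)$-derivatives; crucially, no $u^\epsilon_x u^\tau_x$ or $u^\epsilon_y u^\tau_y$ monomials can appear, since $L$ is jointly bilinear in the pair $(u_x,u_y)$ with no pure $u_x u_x$ or $u_y u_y$ terms.

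Finally, comparing with $E_\alpha(L)=M_{\alpha\bet}(u^\bet_{xy}-f^\bet)$ and using that $M_{\alpha\bet}$ is nonsingular gives
\begin{equation*}
f^\bet=-\left(M^{-1}\right)^{\bet\alpha}\Psi_\alpha,
\end{equation*}
which has precisely the asserted structure: a bilinear $u_x u_y$ piece, linear pieces in $u_x$ and $u_y$, and a piece independent of first derivatives, all of whose coefficients lie in $C^\infty(J^0(E))$. The argument is essentially bookkeeping; the only potential obstacle is tracking index symmetries during the Euler--Lagrange calculation, but no conceptual difficulty arises because Theorem~\ref{TomLager} has already done the substantive work of eliminating $\partial^2 L/\partial u^\alpha_x\partial u^\bet_x$ and $\partial^2 L/\partial u^\alpha_y\partial u^\bet_y$ and of removing the 1-jet dependence of $M_{\alpha\bet}$, $R_{\alpha\bet}$, $Q_\alpha$, $P_\alpha$ and $N$.
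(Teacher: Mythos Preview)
Your proposal is correct and follows essentially the same route as the paper: invoke Theorem~\ref{TomLager} to put $L$ in the normal form \eqref{Lnormal}, compute $E_\alpha(L)$ explicitly to see that it equals $(R_{\alpha\bet}+R_{\bet\alpha})u^\bet_{xy}$ plus a first-order expression bilinear in $(u_x,u_y)$ with $C^\infty(J^0(E))$ coefficients, and then multiply through by the inverse of $M_{\alpha\bet}=R_{\alpha\bet}+R_{\bet\alpha}$. The paper simply writes out the Euler--Lagrange expression \eqref{ELmess} in full rather than arguing structurally as you do, but the content is identical.
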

\begin{proof}
According to Theorem \eqref{TomLager}, we may assume that there is
Lagrangian $L$ of the form \eqref{Lnormal} and a  variational
multiplier $M_{\alpha\bet}(x,y,u),$ with $\det M_{\alpha\bet} \neq
0$ and
\begin{equation}\label{ELclean}
E_\alpha(L) = M_{\alpha\bet} (u^\bet_{xy} - f^\bet).
\end{equation}
On the other hand,  the Euler-Lagrange equations for
\eqref{Lnormal} are explicitly given by
\begin{gather} \label{ELmess}
 E_\alpha(L) = (R_{\alpha\bet} + R_{\bet\alpha}) u^\bet_{xy} +
\left(\dfrac{\partial R_{\alpha\gamma}}{\partial u^\bet} + \dfrac{\partial
R_{\bet\alpha}} {\partial u^\gamma} -  \dfrac{\partial R_{\bet\gamma}}
{\partial u^\alpha}    \right) u^\bet_x u^\gamma_y \,  +
\\
 \left( \dfrac{\partial Q_\alpha}{\partial u^\bet} - \dfrac{\partial
Q_\bet}{\partial u^\alpha} + \dfrac{\partial
R_{\alpha\bet}}{\partial y} \right) u^\bet_x  +  \left(
\dfrac{\partial P_\alpha}{\partial u^\bet} - \dfrac{\partial
P_\alpha}{\partial u^\bet} + \dfrac{\partial
R_{\alpha\bet}}{\partial x} \right) u^\bet_y    -  \dfrac{\partial
N}{\partial u^\alpha} + \dfrac{\partial Q_\alpha}{\partial x}  +
\dfrac{\partial P_\alpha}{\partial y}. \notag
\end{gather}
Clearly,  $M_{\alpha\bet} = (R_{\alpha\bet} + R_{\bet\alpha})$ and
multiplying equations \eqref{ELclean} and \eqref{ELmess} by
$M^{\alpha\gamma},$ where $M^{\alpha\gamma} M_{\gamma\bet} =
\delta^\alpha_\bet,$ yields the desired result.
\end{proof}

In view of Theorem \eqref{TomLager}, we see that solving the
multiplier problem is equivalent to finding all type (1,2) forms
$\omega$ of the form \eqref{FormOfOmega} with $d\omega=0.$ From
Corollary \eqref{CorrNormF}, we can restrict ourselves to
examining systems of partial differential equations
\begin{equation}\label{sysnouxux}
u_{xy}^\alpha + C^\alpha_{\gamma\epsilon}(x,y,u^\bet) u^\gamma_x
u^\epsilon_y + A^\alpha_\gamma (x,y,u^\bet) u^\gamma_x +
B^\alpha_\gamma(x,y,u^\bet) u^\gamma_y + G^\alpha(x,y,u^\bet)=0.
\end{equation}
For systems \eqref{sysnouxux},  the following proposition gives
necessary and sufficient conditions for $d \omega =0.$

\begin{prop}\label{CondyMult}  There
exists a differential form $\omega \in \Omega^{1,2}(\eqman)$ of
type \eqref{FormOfOmega} with $d \omega =0$ if and only if
$M_{\alpha\bet} = \left(R_{\alpha\bet} + R_{\bet\alpha}\right)/2$
satisfies $M_{\alpha\bet}=M_{\alpha\bet}(x,y,u^\gamma)$ and
\begin{align}\label{MHK}
M_{\alpha\gamma} \, H^\gamma_\bet &=  M_{\bet\gamma} \,
K^\gamma_\alpha, \qquad
\\ \label{EqMS}
M_{\alpha\gamma} \, S^\gamma_{\bet\epsilon} &= - M_{\bet\gamma} \,
S^\gamma_{\alpha\epsilon} ,
\\ \label{dMFun}
d M_{\alpha\bet} &= M_{\alpha\gamma} \, \Omega^\gamma_\bet +
M_{\bet\gamma} \, \Omega^\gamma_\alpha,
\end{align}
where $S^\gamma_{\bet\epsilon} = C^\gamma_{\bet\epsilon} -
C^\gamma_{\epsilon\bet} \, $ and $ \, \Omega^\gamma_\alpha =
C^\gamma_{\bet\epsilon} \, du^\epsilon + B^\gamma_\alpha \, dx +
A^\gamma_\alpha \, dy.$
\end{prop}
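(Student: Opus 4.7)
The proof strategy is to compute $d\omega$ directly on $\eqman$, using the bi-grading $d = d_H + d_V$ and the structure equations \eqref{destructo}. Since $d_H\omega$ and $d_V\omega$ sit in different bi-degrees, $d\omega = 0$ splits into the separate conditions $d_V\omega = 0$ and $d_H\omega = 0$. Matching coefficients of a pointwise basis for $\Omega^{2,2}(\eqman) \oplus \Omega^{1,3}(\eqman)$ will then translate these into the algebraic and differential conditions on $M_{\alpha\bet}$.

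The vertical condition $d_V\omega = 0$ was already analyzed in the course of proving Theorem \eqref{TomLager}: it forces $R_{\alpha\bet}$ to depend only on $(x,y,u^\gamma)$ and determines the coefficients $T_{\alpha\bet}$ and $S_{\alpha\bet}$ appearing in \eqref{FormOfOmega} in terms of the first derivatives of $R_{\alpha\bet}$, modulo symmetric remainders $T^0_{\alpha\bet}(x,y,u)$ and $S^0_{\alpha\bet}(x,y,u)$. In particular $M_{\alpha\bet} = (R_{\alpha\bet} + R_{\bet\alpha})/2$ has no jet dependence, as claimed.

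The bulk of the work lies in analyzing $d_H\omega = 0$ on $\eqman$. Expanding with $d_H\theta^\alpha = dx \wedge \theta^\alpha_x + dy \wedge \theta^\alpha_y$, $d_H\theta^\alpha_x = dx \wedge \theta^\alpha_{xx} + dy \wedge d_V f^\alpha$, $d_H\theta^\alpha_y = dx \wedge d_V f^\alpha + dy \wedge \theta^\alpha_{yy}$, together with $d_H g = D_x g\,dx + D_y g\,dy$ for the coefficient functions, the resulting 3-form decomposes according to contact type. The coefficients of $dx \wedge dy \wedge \theta^\bet_{xx} \wedge \theta^\alpha$ and of $dx \wedge dy \wedge \theta^\bet_{yy} \wedge \theta^\alpha$ vanish automatically by skew-symmetry, since $R_{\alpha\bet}$ is independent of second derivatives. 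The coefficients of $dx \wedge dy \wedge \theta^\gamma_x \wedge \theta^\epsilon_y$ produce a purely algebraic relation involving the mixed partials $\partial^2 f^\sigma/\partial u^\gamma_x \partial u^\epsilon_y$; decomposing into its symmetric and skew parts in the outer indices and using the definition of $S^\gamma_{\alpha\bet}$ from \eqref{hks} yields \eqref{EqMS}. The coefficients of $dx \wedge dy \wedge \theta^\alpha \wedge \theta^\bet_x$ and of $dx \wedge dy \wedge \theta^\alpha \wedge \theta^\bet_y$ produce, after substituting the expressions for $T_{\alpha\bet}$ and $S_{\alpha\bet}$ from the previous step and identifying $\Omega^\gamma_\alpha = C^\gamma_{\alpha\tau}\,du^\tau + B^\gamma_\alpha\,dx + A^\gamma_\alpha\,dy$ from \eqref{sysnouxux}, exactly the differential identity \eqref{dMFun}. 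Finally, the coefficients of $dx \wedge dy \wedge \theta^\alpha \wedge \theta^\bet$ involve $D_x T_{\alpha\bet} - D_y S_{\alpha\bet}$ together with the $\partial f^\gamma/\partial u^\alpha$ contribution from $d_V f^\gamma$; substituting $T, S$ from the vertical analysis and reorganizing via the explicit formulas \eqref{hks} collapses them to \eqref{MHK}.

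For the converse, starting from $M_{\alpha\bet}$ satisfying \eqref{MHK}, \eqref{EqMS}, and \eqref{dMFun}, I would set $R_{\alpha\bet} = \tfrac12 M_{\alpha\bet}$, define $T_{\alpha\bet}$ and $S_{\alpha\bet}$ by the same expressions dictated by $d_V\omega=0$, and observe that each coefficient identity above runs in reverse, so that $d\omega = 0$. The principal obstacle is the bookkeeping in the $d_H$ calculation: the term $d_V f^\gamma$ contributes to both the $\theta^\gamma_x \wedge \theta^\epsilon_y$ piece and the $\theta^\alpha \wedge \theta^\bet$ piece, and the second-derivative terms $u^\bet_{xx}$ and $u^\bet_{yy}$ which appear in the explicit formulas \eqref{hks} for $H^\gamma_\alpha$ and $K^\gamma_\alpha$ must be seen to drop out, both because $R_{\alpha\bet}$ is jet-independent and because of the antisymmetrization built into $\theta^\alpha \wedge \theta^\bet$.
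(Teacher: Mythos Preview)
Your overall strategy---split $d\omega=0$ into $d_V\omega=0$ and $d_H\omega=0$ via the bigrading and read off conditions on $M_{\alpha\bet}$---is exactly what the paper does (after passing to the auxiliary form $\omega'=\omega-\tfrac14 d_H\bigl[(R_{\alpha\bet}-R_{\bet\alpha})\theta^\alpha\wedge\theta^\bet\bigr]$, which replaces $R_{\alpha\bet}$ by the symmetric $M_{\alpha\bet}$ without changing $d_H$). But two of your coefficient identifications are wrong, and fixing them is not just bookkeeping.

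First, there is no $dx\wedge dy\wedge\theta^\gamma_x\wedge\theta^\epsilon_y$ component in $d_H\omega$ at all: the two contributions coming from $d_H\theta^\alpha$ acting on the $\theta^\alpha\wedge\theta^\bet_x\wedge dx$ piece and on the $\theta^\bet\wedge\theta^\alpha_y\wedge dy$ piece cancel identically, regardless of any symmetry of $R_{\alpha\bet}$. Moreover, $f$ enters $d_H\omega$ only through $d_Vf^\bet$, which involves first partials of $f$, never the mixed second partials $\partial^2 f/\partial u_x\partial u_y$. So \eqref{EqMS} cannot arise the way you describe. In the paper it comes instead from the $\theta^\alpha\wedge\theta^\bet_x$ and $\theta^\alpha\wedge\theta^\bet_y$ coefficients: these yield the two equations $D_xM_{\alpha\bet}=-M_{\alpha\gamma}\partial f^\gamma/\partial u^\bet_y-M_{\bet\gamma}\partial f^\gamma/\partial u^\alpha_y$ and $D_yM_{\alpha\bet}=-M_{\alpha\gamma}\partial f^\gamma/\partial u^\bet_x-M_{\bet\gamma}\partial f^\gamma/\partial u^\alpha_x$. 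Since $M$ has no jet dependence, each equation determines $\partial M_{\alpha\bet}/\partial u^\epsilon$ separately (one via $C^\gamma_{\epsilon\bet}$, the other via $C^\gamma_{\bet\epsilon}$), and the consistency of these two expressions is precisely \eqref{EqMS}.

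Second, the $\theta^\alpha\wedge\theta^\bet$ coefficient does \emph{not} collapse directly to \eqref{MHK}. Substituting the expressions for $T_{\alpha\bet}$ and $V_{\alpha\bet}$ and using the $D_xM$, $D_yM$ equations yields only the symmetric condition $M_{\alpha\gamma}(H^\gamma_\bet+K^\gamma_\bet)=M_{\bet\gamma}(H^\gamma_\alpha+K^\gamma_\alpha)$. To get the full \eqref{MHK} you must combine this with the integrability condition $d^2M_{\alpha\bet}=0$ for \eqref{dMFun}, which (given \eqref{EqMS}) gives the complementary relation $M_{\alpha\gamma}(H^\gamma_\bet-K^\gamma_\bet)+M_{\bet\gamma}(H^\gamma_\alpha-K^\gamma_\alpha)=0$. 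Only together do these two yield \eqref{MHK}. Your sketch misses this split, and without it the argument does not close.
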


\begin{proof} In the proof of Theorem \eqref{TomLager} we showed that if $d_V \omega=0,$
then $R_{\alpha\beta}$ has no 1-jet dependence.   Since
$M_{\alpha\bet} = (R_{\alpha\bet} + R_{\bet\alpha})/2,$ it follows
immediately that the functions $M_{\alpha\beta}$ depends only $x,$
$y$ and $u.$

To simplify our calculations, we define $\omega' = \omega -
\frac{1}{4} d_H \left(R_{\alpha\bet}-R_{\bet\alpha}\right)
\theta^\alpha \wedge \theta^\bet.$ A routine calculation using
\eqref{destructo} yields
\begin{equation}\label{FormOfOmegaPr}
\omega'= ( T_{\alpha\bet} dx + V_{\alpha\bet} dy ) \wedge
\theta^\alpha \wedge \theta^\bet  + M_{\alpha\bet} ( \theta^\alpha
\wedge \theta_x^\bet  \wedge dx - \theta^\bet \wedge
\theta_y^\alpha \wedge dy),
\end{equation}
where $M_{\alpha\bet} = (R_{\alpha\bet} + R_{\bet\alpha})/2.$
Without loss of generality we may assume that $T_{\alpha\beta}$
and $V_{\alpha\bet}$ are skew-symmetric. Using \eqref{zerocomo}
and the exactness of the columns of the variational bicomplex, it
can be shown  that $d \omega=0$ if and only if $d_H \omega'=0$ and
$d_V \omega'$ is $d_H$ exact.   We will later show that if $d_H
\omega'=0$ for \eqref{FormOfOmegaPr}, then $d_V \omega'$ is
necessarily $d_H$ exact.

We first calculate $d_H \omega'$ and show that $d_H \omega'=0$ if
and only if \eqref{MHK}-\eqref{dMFun} hold.   Using
\eqref{destructo} we calculate
\begin{align}\label{old55}
d_H \omega' &= \left[\left(D_x V_{\alpha\bet}  - D_y T_{\alpha\bet} \right)
\theta^\alpha \wedge \theta^\bet  - 2 M_{\alpha\bet} \theta^\alpha\wedge (d_V
f^\bet) \right] \wedge dx \wedge dy \, -
\\
& \ \  \left[\left( D_y M_{\alpha\bet} - 2V_{\alpha\bet} \right) \theta^\alpha
\wedge \theta_x^\bet  + \left( D_x M_{\alpha\bet} + 2T_{\alpha\bet} \right)
\theta^\alpha \wedge \theta_y^\bet \right]  \wedge dx \wedge dy. \notag
\end{align}
It follows that $d_H \omega'=0$ if and only if
\begin{gather}\label{681eq}
D_x M_{\alpha\bet}  + 2 T_{\alpha\bet} + 2 M_{\alpha\gamma} \dfrac{\partial
f^\gamma}{\partial u^\bet_y} = 0, \qquad  D_y M_{\alpha\bet} - 2 V_{\alpha\bet}
+ 2 M_{\alpha\gamma} \dfrac{\partial f^\gamma}{\partial u^\bet_x} = 0,
\\\label{680eq}
D_x V_{\alpha\bet} - D_y T_{\alpha\bet} - M_{\alpha\gamma} \dfrac{\partial
f^\gamma}{\partial u^\bet} + M_{\bet\gamma} \dfrac{\partial f^\gamma}{\partial
u^\alpha}=0.
\end{gather}
Decomposing the two equations \eqref{681eq} into their symmetric
and skew symmetric parts produces four equations
\begin{gather}\label{682eq}
D_x M_{\alpha\bet} = -M_{\alpha\gamma} \dfrac{\partial f^\gamma}{\partial
u^\bet_y} - M_{\bet\gamma} \dfrac{\partial f^\gamma}{\partial u^\alpha_y},
\quad D_y M_{\alpha\bet} = -M_{\alpha\gamma} \dfrac{\partial f^\gamma}{\partial
u^\bet_x} - M_{\bet\gamma} \dfrac{\partial f^\gamma}{\partial u^\alpha_x},
\\ \label{683eq}
T_{\alpha\bet} = \dfrac{1}{2} \left( M_{\bet\gamma} \dfrac{\partial
f^\gamma}{\partial u^\alpha_y} - M_{\alpha\gamma} \dfrac{\partial
f^\gamma}{\partial u^\bet_y} \right), \qquad V_{\alpha\bet} = \dfrac{1}{2}
\left( M_{\alpha\gamma} \dfrac{\partial f^\gamma}{\partial u^\bet_x} -
M_{\bet\gamma} \dfrac{\partial f^\gamma}{\partial u^\alpha_x} \right).
\end{gather}
Substituting \eqref{683eq} into \eqref{680eq} and taking
\eqref{682eq} into account, we arrive at
\begin{equation}\label{MHPLUSK}
M_{\alpha\gamma}( H^\gamma_\bet + K^\gamma_\bet ) = M_{\bet\gamma}
(H^\gamma_\alpha + K^\gamma_\alpha).
\end{equation}
where
\begin{equation*}
H_{\alpha}^\gamma = \dfrac{\partial f^\gamma}{\partial u^\alpha} +
\dfrac{\partial f^\gamma}{\partial u^\tau_y} \dfrac{\partial
f^\tau}{\partial u^\alpha_x} - D_x \left[ \dfrac{\partial
f^\gamma}{\partial u^\alpha_x} \right], \quad K_{\alpha}^\gamma =
\dfrac{\partial f^\gamma}{\partial u^\alpha} + \dfrac{\partial
f^\gamma}{\partial u^\tau_x} \dfrac{\partial f^\tau}{\partial
u^\alpha_y} - D_y \left[ \dfrac{\partial f^\gamma}{\partial
u^\alpha_y} \right].
\end{equation*}
It follows that $d_H \omega'=0$ if and only if \eqref{682eq} and
\eqref{MHPLUSK} hold. For a system of differential equations
\eqref{sysnouxux},  we expand \eqref{682eq} to arrive at
\begin{align*}
\dfrac{\partial M_{\alpha\bet}}{\partial x \ \ }  +\dfrac{\partial
M_{\alpha\bet}}{\partial u^\epsilon \ } u^\epsilon_x &=
M_{\alpha\gamma} \left(C^\gamma_{\epsilon\bet} u^\epsilon_x +
B^\gamma_\bet \right) + M_{\bet\gamma}
\left(B^\gamma_{\epsilon\alpha} u^\epsilon_x +
A^\gamma_\alpha\right),
\\
\dfrac{\partial M_{\alpha\bet}}{\partial y \ \ }  +\dfrac{\partial
M_{\alpha\bet}}{\partial u^\epsilon \ } u^\epsilon_y &= M_{\alpha\gamma}
\left(C^\gamma_{\bet\epsilon} u^\epsilon_y  + A^\gamma_\bet \right) +
M_{\bet\gamma} \left(C^\gamma_{\alpha\epsilon} u^\epsilon_y +
A^\gamma_\alpha\right).
\end{align*}
Since $M_{\alpha\bet}$ has no first-order dependence, we have
\begin{gather}\label{goodburger}
\dfrac{\partial M_{\alpha\bet}}{\partial u^\epsilon \ } = M_{\alpha\gamma}
C^\gamma_{\bet\epsilon} + M_{\bet\gamma} C^\gamma_{\alpha\epsilon}, \qquad
\dfrac{\partial M_{\alpha\bet}}{\partial u^\epsilon \ } = M_{\alpha\gamma}
C^\gamma_{\epsilon\bet} + M_{\bet\gamma} C^\gamma_{\epsilon\alpha},
\\
\dfrac{\partial M_{\alpha\bet}}{\partial x \ \ } =
M_{\alpha\gamma} B^\gamma_{\bet} + M_{\bet\gamma}
B^\gamma_{\alpha}, \qquad  \dfrac{\partial
M_{\alpha\bet}}{\partial y \ \ } = M_{\alpha\gamma}
A^\gamma_{\bet} + M_{\bet\gamma} A^\gamma_{\alpha}.  \notag
\end{gather}
It follows that \eqref{goodburger} is equivalent to conditions
\eqref{EqMS} and \eqref{dMFun}.  If \eqref{EqMS} holds, then it
can be shown that the integrability conditions $d^2
M_{\alpha\bet}=0$ for \eqref{dMFun} are
\begin{equation}\label{pizza}
M_{\alpha\sigma} \left( H^\sigma_\bet - K^\sigma_\bet \right) +
M_{\bet\sigma} \left( H^\sigma_\alpha - K^\sigma_\alpha \right) =
0.
\end{equation}
Equations  \eqref{pizza} and \eqref{MHPLUSK}  both hold if and
only if $M_{\alpha\sigma} H^\sigma_\bet = M_{\bet\sigma}
K^\sigma_\alpha,$ which is precisely condition \eqref{MHK}.

We now show that $d_V \omega'$ is $d_H$ exact whenever $d_H
\omega'=0.$  We claim that $d_H \zeta = d_V \omega'$ for
\begin{equation}
\zeta = \tfrac{1}{6} M_{\alpha\sigma} S^\sigma_{\bet\gamma} \,
\theta^\alpha \wedge \theta^\bet \wedge \theta^\gamma
\end{equation}
From \eqref{sysnouxux}, \eqref{FormOfOmegaPr}, and \eqref{683eq},
we deduce that $\omega'$ can be expressed as
\begin{gather*}
\omega'=M_{\alpha\sigma} \, \left( C^\sigma_{\tau\bet} u^\tau_x +
B^\sigma_\bet \right) \, \theta^\alpha \wedge \theta^\bet \wedge
dx    +  M_{\alpha\bet} \, \theta^\alpha \wedge \theta^\bet_x
\wedge dx  \, -
\\
\quad M_{\alpha\sigma} \left(  C^\sigma_{\bet\tau} u^\tau_y +
A^\sigma_\bet \right) \, \theta^\alpha \wedge \theta^\beta \wedge
dy  - M_{\alpha\bet}  \theta^\alpha \wedge \theta^\bet_y \wedge
dy.
\end{gather*}
We now calculate $d_V \omega' - d_H \zeta.$  Since we are assuming
that $d_H \omega'=0,$  we use  \eqref{EqMS}, \eqref{dMFun}, and
the fact that $S^\sigma_{\bet\gamma}= C^\sigma_{\bet\gamma} -
C^\sigma_{\gamma\bet}$  to arrive at
\begin{gather} \label{6111nik}
d_V  \omega' -  d_H \zeta = Q_{\alpha\beta\gamma} \, \theta^\alpha
\wedge \theta^\bet \wedge \theta^\gamma \wedge dx \ + \
R_{\alpha\beta\gamma} \, \theta^\alpha \wedge \theta^\bet \wedge
\theta^\gamma \wedge dy \ +
\\
\left[\left( \tfrac{1}{2} M_{\alpha\sigma} S^\sigma_{\bet\gamma} -
M_{\gamma\sigma} C^\sigma_{\alpha\bet} \right) \theta^\gamma_x \wedge dx \, -
\left( \tfrac{1}{2} M_{\alpha\sigma} S^\sigma_{\bet\gamma} - M_{\gamma\sigma}
C^\sigma_{\alpha\bet}  \right) \theta^\gamma_y \wedge dy \right] \wedge
\theta^\alpha \wedge \theta^\bet, \notag
\end{gather}
where $Q_{\alpha\beta\gamma}$ and $R_{\alpha\beta\gamma}$ are
totally skew-symmetrized expressions depending on
$M_{\alpha\bet},$ $C_{\bet\gamma}^\alpha,$ $A^\alpha_\gamma,$
$B^\alpha_\gamma$ and their derivatives.    If we skew-symmetrize
over $\alpha$ and $\beta,$ it follows from \eqref{EqMS} that
\eqref{6111nik} simplifies to
\begin{equation}\label{almostlastcheck}
d_V \omega' - d_H \zeta =  Q_{\alpha\bet\gamma} \theta^\alpha \wedge
\theta^\bet \wedge \theta^\gamma \wedge dx + R_{\alpha\bet\gamma} \theta^\alpha
\wedge \theta^\bet \wedge \theta^\gamma \wedge dy
\end{equation}
Since $d_H (d_V \omega' - d_H \zeta) = -d_V(d_H \omega')=0,$
applying $d_H$ to the right hand side of \eqref{almostlastcheck}
produces the equation
\begin{equation}\label{qqzero}
\left[ 3Q_{\alpha\bet\gamma} \theta^\alpha_y  -  3 R_{\alpha\bet\gamma}
\theta^\alpha_x  +  (D_y Q_{\alpha\beta\gamma}  - D_x R_{\alpha\bet\gamma})
\theta^\alpha \, \right] \wedge \theta^\bet \wedge \theta^\gamma \wedge dx
\wedge dy = 0.
\end{equation}
It follows form \eqref{qqzero} that
$Q_{\alpha\bet\gamma}=R_{\alpha\beta\gamma} = 0$ and we can
conclude that $d_V \omega'=d_H \zeta.$
\end{proof}

\section{Concluding Remarks}

We conjecture that the methods used in this paper can be used to obtain a complete solution of the inverse problem for systems of the form
\begin{equation}\label{fart2009}
g^{ij}(x,y,u^\gamma) h_{\alpha\beta} u^\beta_{ij}  + f_\alpha(x^k,u^\gamma,u^\gamma_k)=0,
\end{equation}
where $\alpha,\beta=1,\dots,m$ and $i,j=1,\dots,n.$  In particular, if $\det g^{ij} \neq 0$ and $\det h_{\alpha\beta}\neq 0,$ then the system \eqref{fart2009} is a generalization of harmonic map equation.   A partial solution of the inverse problem harmonic map equation was obtained was obtained by Henneaux \cite{Henneaux1984}.   It would be some interest to derive a set of invariant analogous to the invariant derived in this paper that completely characterize the existence of Lagrangians for systems \eqref{fart2009}.

\setcounter{section}{0} \setcounter{theorem}{0}
\renewcommand{\thetheorem}{\thesection.\arabic{theorem}}
\setcounter{equation}{0}
\renewcommand{\theequation}{\thesection.\arabic{equation}}
\setcounter{figure}{0} \setcounter{table}{0}

\appendix
\section*{Appendix}
\renewcommand{\thesection}{A}

To complete our solution to the inverse problem, we have the
following general result on existence of solutions to systems of
total differential equations with an algebraic constraint.
Proposition \eqref{totallyawesome} establishes \eqref{Epop1} and
\eqref{Epop2} and completes the proof of Theorem
\eqref{inversesolution}.

\begin{prop}\label{totallyawesome} {Consider the system of total differential
equations
\begin{equation}\label{bigde1} \dfrac{\partial z^\alpha}{\partial
x^i} = A^\alpha_{\gamma i } (x) z^\gamma \qquad  1 \leq \alpha \leq m ; \ \ 1
\leq i \leq n,
\end{equation}
coupled with the algebraic  condition}
\begin{equation}\label{sidecond}
B^a_\gamma (x) z^\gamma = 0,   \qquad 1\leq a \leq l.
\end{equation}
{Suppose the integrability conditions for \eqref{bigde1} are
satisfied whenever \eqref{sidecond} holds and that given $x_0 \in
{\mathbf R}^n,$ there exists a neighborhood $U$ of $x_0$ such that
$ \text{Rank} \left\{ B^a_\gamma \left(x_0 \right)\right\} =
\text{Rank} \left\{ B^a_\gamma \left(x\right)\right\}=k \leq l,$
for all  $\ x \in U.$ We also assume that the algebraic system
\eqref{sidecond} is complete in the sense that differentiating
\eqref{sidecond} and substituting from \eqref{bigde1} produces no
new algebraic conditions.  Given $x_0 \in U \subset {\mathbf R}^n$
and $z_0 \in {\mathbf R}^m$ with $B^a_\gamma (x_0) z_0^\gamma=0,$
there exists unique smooth functions $z^\alpha = z^\alpha(x)$
defined on a neighborhood  $V \subset U$ of $x_0,$ such that
$z^\alpha(x)$ satisfy both \eqref{bigde1} and \eqref{sidecond},
and $z^\alpha(x_0) = z^\alpha_0.$}
\end{prop}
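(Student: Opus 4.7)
The plan is to reduce the constrained total differential system to an unconstrained Frobenius system on the kernel subbundle of $B$, then invoke the classical Frobenius theorem. First, since $\text{Rank}\,B(x)=k$ is constant on a neighborhood $U$ of $x_0$, the fibers $\ker B(x)$ form a smooth rank $m-k$ subbundle of $U\times\mathbf{R}^m$. After possibly shrinking $U$, I would choose a smooth pointwise basis $\{v_1(x),\dots,v_{m-k}(x)\}$ for this subbundle and write any candidate solution as $z^\alpha(x)=v_j^\alpha(x)w^j(x)$; this decomposition is unique and automatically enforces \eqref{sidecond}.

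Next I would exploit the completeness hypothesis. The fact that differentiating \eqref{sidecond} and substituting from \eqref{bigde1} yields no new algebraic conditions means, after a short calculation, that every row of $\partial_i B^a_\gamma + B^a_\delta A^\delta_{\gamma i}$ lies in the row span of $B^a_\gamma$. Contracting against the basis vectors $v_j$ of $\ker B(x)$ then forces $\partial_i v_j^\alpha - A^\alpha_{\gamma i}v_j^\gamma\in\ker B(x)$, so there exist smooth coefficients $\Gamma^l_{ji}(x)$ with
\begin{equation*}
\partial_i v_j^\alpha - A^\alpha_{\gamma i}\,v_j^\gamma \;=\; -\Gamma^l_{ji}(x)\,v_l^\alpha.
\end{equation*}
Substituting $z^\alpha=v_j^\alpha w^j$ into \eqref{bigde1} and cancelling the linearly independent frame $\{v_j\}$ reduces the original constrained problem to the \emph{unconstrained} linear total differential system
\begin{equation*}
\frac{\partial w^j}{\partial x^i} \;=\; \Gamma^j_{li}(x)\,w^l.
\end{equation*}

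To close the argument, I would verify that this reduced system satisfies the Frobenius integrability conditions. The hypothesis that the integrability of \eqref{bigde1} holds whenever \eqref{sidecond} does, together with the fact that $z=v\cdot w$ automatically satisfies \eqref{sidecond}, forces the mixed-partial identities on the $w^j$ to hold identically. Applying the classical Frobenius theorem then produces a unique smooth $w(x)$ on some neighborhood $V\subset U$ of $x_0$, with initial value $w_0$ obtained by solving $v_j^\alpha(x_0)w_0^j=z_0^\alpha$ (which has a unique solution since $z_0\in\ker B(x_0)$ and $\{v_j(x_0)\}$ is a basis of that kernel). Setting $z^\alpha(x)=v_j^\alpha(x)w^j(x)$ then yields the required solution of both \eqref{bigde1} and \eqref{sidecond}, and uniqueness of $z$ follows from uniqueness of $w$ together with the unique decomposition in the frame.

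The main obstacle, as I see it, is the bookkeeping required to translate the original integrability-modulo-constraints into genuine integrability of the reduced $w$-system; the completeness assumption and constant-rank hypothesis are precisely what make this translation clean, but one must unwind the linear algebra carefully to verify both that $\Gamma^l_{ji}$ is well defined and that its commutator vanishes. Everything else is standard: once the kernel subbundle admits a smooth frame and the flow of \eqref{bigde1} preserves it, the problem collapses to an ordinary Frobenius existence question on a trivialized vector bundle.
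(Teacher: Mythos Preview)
Your argument is correct, but it takes a different route from the paper's. The paper works on the total space $\mathbf{R}^n\times\mathbf{R}^m$: it defines the distribution $\Delta$ spanned by $X_i=\partial_{x^i}+A^\gamma_{\sigma i}z^\sigma\partial_{z^\gamma}$, introduces $F(x,z)=B^a_\gamma(x)z^\gamma$, shows $\Sigma=F^{-1}(0)$ is a regular submanifold (using the constant-rank hypothesis), uses the completeness condition to conclude $X_i|_p\in T_p\Sigma$, and then applies the Frobenius theorem to $\Delta|_\Sigma$ directly. No frame for $\ker B$ is ever chosen. Your approach instead trivializes the kernel bundle by a smooth frame $\{v_j\}$, pulls the problem down to an unconstrained linear total differential system for the frame coefficients $w^j$, and applies Frobenius there. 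The two arguments are equivalent: your reduced $w$-system is precisely the paper's distribution $\Delta|_\Sigma$ written in the product coordinates $(x,w)$ coming from your trivialization. The paper's version is a bit cleaner conceptually since it avoids the frame bookkeeping, while yours makes the linear structure and the role of the induced connection $\Gamma$ on $\ker B$ more explicit. One small point worth sharpening in your write-up: the step ``forces the mixed-partial identities on the $w^j$ to hold identically'' should be stated as vanishing of the curvature of $\Gamma$, which follows because the curvature of $A$ annihilates $\ker B$ by hypothesis and the induced connection on a parallel subbundle has curvature equal to the restriction of the ambient curvature.
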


\begin{remark} For the more geometrically inclined reader we note
that Proposition \eqref{totallyawesome} is a special case of the
following result: {For an $m+n$ dimensional manifold $M$ let
${\mathcal I} \subset \Omega^1(M)$ denote an exterior differential
system of rank $m$ and let $q$ be a regular value of a smooth map
$F:M \to Q.$  Assume for all $p \in \Sigma = F^{-1}(q)$ we have
${\mathcal I}^\bot_p \subset \text{\rm Ker} \, (F_*:T_p M \to T_q
Q)$ and that ${\mathcal I}_{{}_{|\Sigma}}$ is a Frobenius system.
Then for all $p_0 \in \Sigma \subset M$ there is a unique maximal
$n$-dimensional integral manifold $\phi:N \to M$ through $p_0$
such that $\phi(N) \subset \Sigma.$}
\end{remark}

\begin{proof}  Since we are only constructing local solutions to
\eqref{bigde1} and \eqref{sidecond}, we will assume that
$\text{Rank} \{ B^a_\alpha(x) \} = k$ for all $x \in {\mathbf
R}^n.$ Assume that no additional algebraic constraints are created
if we differentiate \eqref{sidecond} with respect to $x^i$ and
substitute from \eqref{bigde1}.   This guarantees the existence of
functions $G^a_{ci}(x)$ such that
\begin{equation}\label{nonewcond}
\dfrac{\partial B^a_\gamma}{\partial x^i} + B^a_\sigma \, A^\sigma_{\gamma i} =
G^a_{ci} \, B^c_\gamma.
\end{equation}

The system of differential equations \eqref{bigde1} are associated
with the $C^\infty$ distribution $\Delta$ on ${\mathbf R^n} \times
{\mathbf R^m}$ generated by the vector fields
\begin{equation}
X_i  = \dfrac{\partial}{\partial x^i}  + A^\gamma_{\sigma i} z^\sigma \,
\dfrac{\partial}{\partial z^\gamma}, \quad i=1,2, \dots, n.
\end{equation}
We then define the $C^\infty$ function $F:{\mathbf R}^n \times {\mathbf R}^m
\to {\mathbf R}^l$ by $F(x,z)=B^a_\gamma (x) z^\gamma$ and let $\Sigma =
F^{-1}(0).$    As a consequence of \eqref{nonewcond} we see that $\text{Rank}
\{ DF_{(x,z)} \} = \text{Rank} \, \{B^a_\gamma(x)\}=k$ for all $(x,z)\in
\Sigma.$   It follows immediately that $\Sigma \subset {\mathbf R}^n \times
{\mathbf R^m}$ is regular submanifold of dimension $m+n-k.$   In addition, we
can use \eqref{nonewcond} to show that $X_i( B^a_\gamma \, z^\gamma)$ vanishes
identically for all $(x,z) \in \Sigma.$ We conclude that for any $p \in \Sigma$
and $i=1,2,\dots,m,$
\begin{equation}\label{tspacesigma}
X_{i_{|\text{\scriptsize$p$}}} \in \text{\rm Ker} ( F_*: T_p {\mathbf R}^{n+m}
\to T_0 {\mathbf R}^l) = T_p \Sigma.
\end{equation}
It follows from \eqref{tspacesigma} that $\Delta_{|\Sigma}$ is a
$C^\infty$ distribution on the submanifold $\Sigma.$  (See Boothby
\cite{BoothbyBook}, Lemma IV.2.4.)  If the integrability
conditions are satisfied for \eqref{bigde1} whenever
\eqref{sidecond} holds, then $\Delta_{|\Sigma}$ is involutive. For
any point $(x_0,z_0) \in \Sigma,$ we can invoke the Frobenius
Theorem to obtain a unique maximal integral manifold $\phi:V \to
\Sigma$ with $V \subset {\mathbf R}^n,$ $\phi(0)=(x_0,z_0),$  and
$T_p \, \phi(V) = \Delta_p.$ Moreover, $\phi$ is $C^\infty$ viewed
as map $\phi:V \to {\mathbf R}^{n+m}$ and $\phi(V)$ defines the
graph of a smooth solution $z^\alpha(x)$ to the system of
differential equations \eqref{bigde1}. Since $\phi(V) \subset
\Sigma,$ we see that $z^\alpha(x)$ satisfies the algebraic
condition \eqref{sidecond}.
\end{proof}

\baselineskip=12pt

\end{document}